\newcommand{\DD}{\mathcal{D}}
\newcommand{\VV}{\mathcal{V}}
\newcommand{\Lie}{\mathfrak{L}}
\newcommand{\diag}{\mathrm{diag}}
\newcommand{\Supp}{\mathrm{Supp}}
\begin{document}
\title{Heat Conduction Networks:~Disposition of Heat Baths and Invariant Measure}
\author{Alain Camanes}
\date{\today}
\address{Laboratoire Jean Leray UMR 6629\newline
Université de Nantes\newline
2, Rue de la Houssinière BP 92208\newline
F-44322 Nantes Cedex 03}
\email{camanes-a@univ-nantes.fr}
\date{\today}
\subjclass[2000]{Primary 82C22 ; Secondary 60K35, 60H10}
\keywords{Heat conduction network, invariant probability, support of invariant measures, diffusion process, Asymptotically Strong Feller, Lyapunov equation, controllability, Lasalle's principle, heat conduction network}
\bibliographystyle{alpha}
\renewcommand{\labelitemi}{$\ast$}
\renewcommand{\labelitemii}{$\ast$}
\maketitle
\begin{abstract}
We consider a model of heat conduction networks consisting of oscillators in contact with heat baths at different temperatures. Our aim is to generalize the results concerning the existence and uniqueness of the stationnary state already obtained when the network is reduced to a chain of particles. Using Lasalle's principle, we establish a condition on the disposition of the heat baths among the network that ensures the uniqueness of the invariant measure. We will show that this condition is sharp when the oscillators are linear. Moreover, when the interaction between the particles is stronger than the pinning, we prove that this condition implies the existence of the invariant measure.
\end{abstract}

\section{Definitions and Results}

\subsection{The motivations}
We consider an arbitrary graph. At each vertex of this graph, there is a particle interacting with the substrate and with its neighbours. Among these particles, some are linked to heat baths; an Ornstein-Uhlenbeck process models this interaction. Given this graph, we establish conditions on the disposition of the heat baths that entails existence and uniqueness of the invariant measure.

When the graph is reduced to a chain,  each extremal particle is connected to a heat bath. This model has been studied in~\cite{EPRB99a,EPRB99b,EH00,RBT02}. The uniqueness of the invariant measure is obtained using controllability properties. This property is deeply connected to the geometry of the chain:~the behaviour of the extremal particle entails the behaviour of its neighbour and so on\ldots The existence of the invariant measure when the interaction is stronger than the pinning has also been obtained. These results were used in~\cite{C07} to solve some variations of this model developed in~\cite{BO05} on the one side and~\cite{LS04} on the other.

To avoid the particular geometry of the chain, we work with general networks. These heat conduction networks have been introduced in~\cite{MNV03} and~\cite{RB03}. Let us notice that an Ornstein-Uhlenbeck process is the sum of a damping term and an excitation term. To understand the effect of each of these quantities, we will not suppose that the heat baths have non-negative temperatures. A recent work of~\cite{BLLO08} uses this kind of result to prove the existence of a self-consistent temperature profile. We will see that their results are closely related to the geometry of the network they consider. First we introduce the model. Then we will state our main results linking existence and uniqueness of the invariant measure to the disposition of the heat baths. Intuitively, the existence and the uniqueness of the invariant measure is related to the disposition of the damped particles, i.e. the particles interacting with a heat bath. The disposition of the heat baths at \emph{non-negative} temperatures has an effect on the regularity of the invariant measure.

\subsection{The model}
Let $G=(\VV,\sim)$ be a connected graph with vertex set $\VV$. Two vertices $i,\, j\in\VV$ are nearest neighbours if there is an edge between them: $i\sim j$. Every node $i\in \VV$ holds a particle of unit mass described by its position $q_i$ and momentum $p_i$. Each particle is pinned and interacts with its neighbours. The \emph{pinning} potential $V$ and the \emph{interaction} potential $U$ are both assumed to be convex polynomial functions. The Hamiltonian of this system is the sum of kinetic and potential energies
$$H(q,p)=\sum_{i\in \VV}\left(\frac{1}{2}p_i^2+V(q_i)+\frac{1}{2} \sum_{j\sim i}U(q_i-q_j)\right).$$

Among the particles, some of them are damped; the non-empty subset $\DD\subset\VV$ is called the \emph{damped} set. Among the damped particles, there is a non-empty subset $\partial\VV\subset\DD$ called the \emph{boundary} set. Each particle $i\in\partial\VV$ of the boundary set is connected to a heat bath at (non-negative) temperature $T_i$. The excitation transmitted by the heat bath is modeled by a Brownian motion. Thus, the dynamics of the heat conduction network is described by the Hamiltonian system of stochastic differential equations, for any $i\in\VV$,
\begin{equation}\label{eq:hcn}
\left\{\begin{array}{ccl}
dq_i&=&p_i\ dt,\\
dp_i&=&-\partial_{q_i}H\ dt-p_i\indicatrice{i\in\DD}\ dt+\sqrt{2T_i}\indicatrice{i\in\partial\VV}\ dB_i,
         \end{array}
\right.
\end{equation}
where $\{B_i,\,i\in\partial\VV\}$ are independent Brownian motions.
\begin{remark}
 Damped particles can be considered as connected to heat baths at possibly null temperatures.
\end{remark}

In the following we write $N=|\VV|$ the number of particles and $n=2N$. We study the diffusion $Z=(q,p)$, solution of the stochastic differential equation~\eqref{eq:hcn} via its semigroup $(P_t)$. The adjoint of $P_t$ acts on probability measures:~for any measurable set $A$ and probability measure $\mu$,
$$P_t^\star\mu(A)=\int_{\R^n}P_t(z,A)\, \mu(dz).$$
Finally, $\LL$ denotes the generator of the diffusion~\eqref{eq:hcn}. Recall that for any smooth function $f$ in the domain $\DD_\LL$ of $\LL$,
\begin{eqnarray*}
 \LL f &=&\sum_{i\in\VV}\partial_{p_i}H\partial_{q_i} f-\partial_{q_i}H\partial_{p_i}f-\sum_{i\in\DD}p_i\partial_{p_i}f+\sum_{i\in\partial\VV}T_i\partial_{p_i}^2f\\
  &=&\{H,f\}-\sum_{i\in\DD}p_i\partial_{p_i}f+\sum_{i\in\partial\VV}T_i\partial_{p_i}^2f,\\
\end{eqnarray*}
where $\{\cdot,\cdot\}$ denotes Poisson bracket. The operator $\LL^\star$ is the formal adjoint of $\LL$.

We are interested in the existence and uniqueness of a stationary state for heat conduction networks, i.e. in probability measures $\mu$ such that for any $t\geq 0$, $P_t^\star\mu=\mu$, or equivalently, $\LL^\star\mu=0$.

\begin{figure}[ht!]
 \begin{center}
  \input{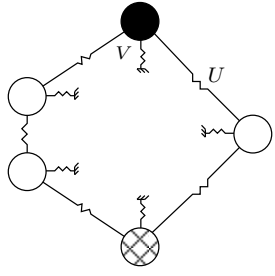}
 \end{center}
\caption{Example of a heat conduction network~:~Damped particles are striped, boundary particles are drawn in black}\label{fig:ex}
\end{figure}

\subsection{The main results}
First of all we will recall and complete some results on the harmonic case, when $U(q)=V(q)=q^2/2$. In this setting, the stochastic differential equation~\eqref{eq:hcn} describing the system is linear.

In the following we will denote $(e_i)_{i\in\{1,\ldots,n\}}$ the canonical base of $\R^n$. A natural space occurring in heat conduction networks is the \emph{controllability space} (see e.g. \cite{W79}). Let $M$ be an $n\times n$ matrix and $\II\subset\VV$ a subset of particles. We will denote $\EE_{M,\II}$ the smallest vector space spanned by $(e_{i+N},\,i\in\II)$ stable by $M$, i.e.
$$\EE_{M,\II}=\mathrm{Span}\left\{M^ke_{i+N},\,i\in\II,\,k\in\N\right\}.$$

We say that the graph is \emph{asymmetric} if $\dim\,\EE_{M,\DD}=n$, where $M$ will be defined in equation~\eqref{eq:harmonic_hcn}. Examples suggest that this condition is related to an asymmetric disposition of the damped particles in the network. Our first theorem states that the existence and uniqueness of the invariant measure is determined by the position of the damped particles.
\begin{theorem}\label{thm:unique_harmonic}
When the potentials are harmonic, there exists a unique invariant measure if and only if
the network is asymmetric, i.e. $\dim\,\EE_{M,\DD}=n$.

When this condition is not satisfied, i.e. $\dim\,\EE_{M,\DD}\neq n$, there exists a quantity invariant with respect to the Hamiltonian flow and the invariant measure is not unique.
\end{theorem}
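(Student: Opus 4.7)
The plan is to reduce the problem to an algebraic analysis of the drift of the linearized SDE $dZ_t=MZ_t\,dt+\Sigma\,dB_t$, where (in the block decomposition $z=(q,p)$)
\[ M=\begin{pmatrix}0 & I_N\\ -A & -\Gamma\end{pmatrix}=J-R,\qquad J=\begin{pmatrix}0 & I_N\\ -A & 0\end{pmatrix},\qquad R=\begin{pmatrix}0 & 0\\ 0 & \Gamma\end{pmatrix}, \]
$A$ is the symmetric positive-definite Hessian of the total potential energy, $\Gamma=\diag(\mathbf 1_{i\in\DD})$, and the columns of $\Sigma$ are $\sqrt{2T_i}\,e_{i+N}$ for $i\in\partial\VV$. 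Two preliminary observations will be used repeatedly. First, $\EE_{M,\DD}$ is stable under $R$ (since $R\R^n\subset\mathrm{span}\{e_{i+N}:i\in\DD\}\subset\EE_{M,\DD}$), hence also under $J=M+R$. Second, every $v$ in $\mathcal F:=\EE_{M,\DD}^\perp$ satisfies $v_{i+N}=0$ for $i\in\DD$, so that $Rv=0$ and $\Sigma^T v=0$; consequently $\mathcal F$ is simultaneously $M^T$- and $J^T$-stable and the two restrictions coincide.

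For the sufficient direction, assuming $\dim\EE_{M,\DD}=n$, the idea is to prove that $M$ is Hurwitz by LaSalle's principle applied to the deterministic flow $\dot z=Mz$ with Lyapunov function $H(z)=\tfrac12(q^TAq+|p|^2)$. A direct computation gives $\dot H=-\sum_{i\in\DD}p_i^2\le 0$, so trajectories converge to the largest $M$-invariant subset of $\{z:p_i=0,\,i\in\DD\}$. Differentiating $p_i(t)\equiv 0$ successively and using $\dot z=Mz$ shows that $z$ stays in this set iff $e_{i+N}^TM^kz=0$ for all $k\ge 0$ and $i\in\DD$, i.e.\ iff $z\perp\EE_{M^T,\DD}$. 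For this Hamiltonian-damped drift the conjugation $M^T=PMP^{-1}$ with $P=\diag(-A,I_N)$ fixes each $e_{i+N}$, so $\EE_{M^T,\DD}=P\,\EE_{M,\DD}$ and $\dim\EE_{M^T,\DD}=\dim\EE_{M,\DD}=n$. Hence the invariant set is $\{0\}$ and $M$ is Hurwitz, so the representation $Z_t^z=e^{Mt}z+\int_0^t e^{M(t-s)}\Sigma\,dB_s$ converges in law to $\mathcal N(0,C)$ (with $C$ the unique solution of $MC+CM^T=-\Sigma\Sigma^T$) independently of the starting point, yielding existence and uniqueness.

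For the converse, assume $\mathcal F\neq\{0\}$. The preliminary facts imply that for every $v\in\mathcal F$,
\[ d\langle v,Z_t\rangle=\langle M^T v,Z_t\rangle\,dt=\langle J^T v,Z_t\rangle\,dt, \]
and since $J^T v\in\mathcal F$, the orthogonal projection $\pi(Z_t)\in\mathcal F$ satisfies a purely deterministic linear ODE whose generator has the same eigenvalues as $J^T|_{\mathcal F}$. The spectrum of $J$ is $\{\pm i\sqrt{\mu}:\mu\in\mathrm{Spec}(A)\}$, all purely imaginary and nonzero, and $J$ is diagonalizable (its eigenvectors are built from those of $A$); diagonalizability is inherited by restrictions to invariant subspaces, so $J^T|_{\mathcal F}$ is diagonalizable with purely imaginary, nonzero spectrum. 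A standard time-averaging then produces a positive-definite symmetric form $Q$ on $\mathcal F$ conserved by the deterministic dynamics of $\pi(Z_t)$, and hence $\widetilde Q(z):=Q(\pi(z),\pi(z))$ is almost surely conserved along $Z_t$; since $R$ vanishes on $\mathcal F$ the same $\widetilde Q$ is also invariant under the pure Hamiltonian flow $\dot z=Jz$. Distinct level sets of $\widetilde Q$ therefore support mutually singular invariant measures, ruling out uniqueness.

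The main technical obstacle is the passage from the LaSalle invariant set $\EE_{M^T,\DD}^\perp$ to the controllability condition $\dim\EE_{M,\DD}=n$; this is where the Hamiltonian-with-damping structure enters, via the conjugation $M^T=PMP^{-1}$. The construction of the conserved quadratic form in the converse is also nontrivial but becomes standard once the structural identity $M^T|_{\mathcal F}=J^T|_{\mathcal F}$ is in hand, which itself rests on the simultaneous $M$- and $R$-stability of $\EE_{M,\DD}$. After these are settled, the Ornstein--Uhlenbeck uniqueness and the averaging producing $Q$ proceed routinely.
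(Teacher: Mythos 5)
Your sufficiency direction is correct and takes a genuinely different route from the paper: you prove that $M$ is Hurwitz by LaSalle's principle for $\dot z=Mz$ together with the conjugation $M^\star=PMP^{-1}$, $P=\diag(-\Gg,I)$, which fixes the vectors $e_{i+N}$ and hence gives $\dim\EE_{M^\star,\DD}=\dim\EE_{M,\DD}=n$; the paper instead proves stability by a direct eigenvalue computation (its Projection and Stability lemmas) and then gets existence and uniqueness from a Wasserstein contraction and the Banach fixed point theorem, whereas you invoke the standard Ornstein--Uhlenbeck convergence to $\mathcal{N}(0,C)$. Both are sound; your LaSalle argument is essentially the computation the paper only carries out later (Section~\ref{section:harmonic_lasalle}), and the structural facts you use ($R\R^n\subset\EE_{M,\DD}$, $Rv=0$ and $\sg^\star v=0$ for $v\in\mathcal{F}=\EE_{M,\DD}^\perp$, $M^\star|_{\mathcal{F}}=J^\star|_{\mathcal{F}}$, diagonalizability of $J^\star|_{\mathcal{F}}$ with purely imaginary spectrum) all check out, as does the averaged positive-definite conserved form $Q$.

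The gap is the last sentence of the converse: ``Distinct level sets of $\widetilde Q$ therefore support mutually singular invariant measures, ruling out uniqueness.'' Pathwise conservation of $\widetilde Q$ only says that each level set is invariant under the stochastic flow; it does not produce an invariant \emph{measure} on any level set, let alone on two distinct ones, and this is exactly the content of the non-uniqueness half of the theorem. What must be excluded is ``an invariant measure exists \emph{and} is unique,'' so you need: given an invariant $\mu$, a construction of a second invariant measure. If $\widetilde Q$ is not $\mu$-a.s.\ constant, restricting $\mu$ to the invariant sets $\{\widetilde Q\le c\}$ and $\{\widetilde Q>c\}$ works (equivalently, the paper's trick: $e^{-\gg\widetilde Q}\,d\mu/Z_\gg$ is again invariant because $\widetilde Q(Z_t)=\widetilde Q(Z_0)$ almost surely). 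But $\mu$ can be concentrated on a single level set: in the diamond example, the stationary Gaussian supported on the symmetric subspace $\{q_2=q_4,\,p_2=p_4\}$ has $\widetilde Q\equiv0$, and then both your restriction argument and the reweighting are vacuous. To exhibit invariant measures on other level sets one needs an additional argument, e.g.\ a spectral splitting $\R^n=E_s\oplus E_c$ of $M$ showing that the noise directions $e_{i+N}$, $i\in\partial\VV$, lie in the stable subspace $E_s$ (this follows from the same conjugation $M^\star=PMP^{-1}$ and the characterization of the purely imaginary eigenvectors), so that the neutral part carries a noise-free isometric flow and one can take products of the stationary law on $E_s$ with rotation-invariant measures on spheres $\{Q=c\}$ of different radii. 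Without some such step, the non-uniqueness claim is asserted rather than proved.
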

Let us notice that this uniqueness condition was already known (see~\cite{EZ04}). We give here a new proof using completeness and provide an explicit quantity invariant by the Hamiltonian flow.

The shape of the support of the invariant measure is then described by the position of the heat baths.
\begin{theorem}\label{thm:support_harmonic} Assume there exists a unique invariant measure. The invariant measure has full support if and only if $\dim\,\EE_{M,\partial\VV}=n$.
\end{theorem}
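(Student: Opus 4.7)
The plan is to exploit that in the harmonic case the diffusion $(Z_t)$ is linear-Gaussian, so the unique invariant measure must be a centered Gaussian $\mathcal{N}(0,C_\infty)$ whose support is entirely determined by the range of $C_\infty$. Concretely, I would first rewrite~\eqref{eq:hcn} as $dZ_t = M Z_t\, dt + \Sigma\, dB_t$, where $M$ is the drift matrix from~\eqref{eq:harmonic_hcn} and $\Sigma$ is the $n\times|\partial\VV|$ noise matrix whose column indexed by $i\in\partial\VV$ is $\sqrt{2T_i}\, e_{i+N}$.

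By assumption the invariant measure exists and is unique, which forces $M$ to be Hurwitz (this is the content, or an immediate byproduct, of Theorem~\ref{thm:unique_harmonic}: any purely imaginary eigenvalue would give a linear quantity preserved by the Hamiltonian flow, contradicting uniqueness). The covariance
$$C_\infty = \int_0^{+\infty} e^{sM}\,\Sigma\Sigma^\star\, e^{sM^\star}\, ds$$
therefore converges and satisfies the Lyapunov equation $M C_\infty + C_\infty M^\star = -\Sigma\Sigma^\star$, and the unique invariant measure is $\mathcal{N}(0, C_\infty)$. Since the support of a centered Gaussian on $\R^n$ is the linear subspace $\mathrm{Im}\, C_\infty = (\ker C_\infty)^\perp$, the equivalence to be proved reduces to
$$\ker C_\infty = \{0\} \iff \dim \EE_{M,\partial\VV} = n.$$

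The final step is the standard Kalman-type computation. A vector $v \in \ker C_\infty$ satisfies $v^\star C_\infty v = 0$, which forces $\Sigma^\star e^{sM^\star} v = 0$ for every $s\geq 0$. Analyticity in $s$ then yields $\Sigma^\star (M^\star)^k v = 0$ for every $k\in\N$, that is $\langle v,\, M^k e_{i+N}\rangle = 0$ for every $i\in\partial\VV$ and $k\geq 0$. This shows $\ker C_\infty = \EE_{M,\partial\VV}^\perp$, so $C_\infty$ is invertible if and only if $\EE_{M,\partial\VV} = \R^n$, which is the claim.

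The only non-routine point I anticipate is the justification that $M$ is Hurwitz under the standing uniqueness assumption; this should follow directly from the invariant-quantity argument already developed in the proof of Theorem~\ref{thm:unique_harmonic}. Everything else is classical linear controllability applied to the pair $(M,\Sigma)$, and no nonlinear machinery (Hörmander, Lasalle, \emph{etc.}) is needed in the harmonic setting.
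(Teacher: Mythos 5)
Your proposal is correct and follows essentially the same route as the paper: identify the unique invariant measure as the centered Gaussian whose covariance solves the Lyapunov equation $MQ+QM^\star=-\sg\sg^\star$, note that $M$ is stable because uniqueness forces the asymmetry condition (Theorem~\ref{thm:unique_harmonic} and Lemma~\ref{lem:stability}), and reduce full support to the invertibility of that covariance. The only difference is that you establish $\mathrm{rank}\,Q=\dim\EE_{M,\partial\VV}$ directly by the standard Kalman/analyticity argument, whereas the paper simply cites Lemma~2.3 of \cite{SZ70} for this fact.
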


Then we will be interested in the anharmonic case where potentials are only supposed to be convex polynomials. We will first state a theorem where all the damped particles are excited, i.e. $\DD=\partial\VV$. Hörmander's condition will be defined precisely in Section~\ref{section:hormander}. The following theorem will be a straightforward consequence of the weak controllability result obtained in~\cite{H05}.
\begin{theorem}\label{thm:unique_control}
If $\DD=\partial\VV$ and Hörmander's condition is satisfied, there exists at most one invariant measure.
\end{theorem}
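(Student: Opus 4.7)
The plan is to deduce at-most-uniqueness of the invariant measure from the conjunction of H\"ormander hypoellipticity and a topological irreducibility statement, concluded by a classical Doob-type argument. The special feature $\DD=\partial\VV$ is what makes the shortcut to [H05] possible: every damped coordinate is driven directly by an independent Brownian motion, so the diffusion vector fields appearing in the It\^o form of \eqref{eq:hcn} are precisely $\sqrt{2T_i}\,\partial_{p_i}$ for $i\in\partial\VV$, with no damped-but-noiseless particles to interpose.

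First I would apply H\"ormander's theorem to the generator $\LL$. Under H\"ormander's condition, the Lie algebra generated by the drift of \eqref{eq:hcn} and the vector fields $\partial_{p_i}$, $i\in\partial\VV$, spans $T_z\R^n$ at every point $z$. Consequently the transition kernels $P_t(z,\cdot)$ admit smooth densities $p_t(z,\cdot)$ for every $t>0$, and the semigroup $(P_t)_{t>0}$ is strong Feller on $\R^n$. In particular, any invariant probability measure is absolutely continuous with respect to Lebesgue measure and its density is smooth.

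Next I would invoke the weak controllability result of [H05]. Its content, specialized to the present setting, is that under H\"ormander's condition the deterministic control system obtained from \eqref{eq:hcn} by replacing the Brownian drivers $B_i$, $i\in\partial\VV$, by smooth controls is approximately controllable between any two points of $\R^n$. Combining this with the Stroock--Varadhan support theorem yields topological irreducibility of $(P_t)$: for every $z\in\R^n$ and every non-empty open $U\subset\R^n$, there exists $t>0$ with $P_t(z,U)>0$. Strong Feller plus topological irreducibility is exactly the hypothesis of Doob's uniqueness theorem, which gives at most one invariant measure and closes the argument.

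The main obstacle is neither H\"ormander nor Doob, both of which are turnkey once their hypotheses are verified; it is the transfer of [H05]'s weak controllability, originally stated for a one-dimensional chain of oscillators, to an arbitrary connected graph. The point to check is that Hairer's argument uses only the algebraic information encoded in the H\"ormander condition together with the fact that every noisy vertex is also damped (so that no ``parasitic'' undamped coordinate has to be controlled through hidden Lie brackets), both of which are granted here by the assumption $\DD=\partial\VV$. Once this verification is made, the theorem follows immediately.
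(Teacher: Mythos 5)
Your overall route is sound and is, in substance, the same one the paper relies on: H\"ormander's condition gives smooth transition densities (hence the strong Feller property), Hairer's controllability result supplies reachability, and a Doob--Khasminskii-type argument closes the uniqueness. The paper simply short-circuits your last two steps by quoting \cite{H08}, Theorem~4.1 (H\"ormander's condition plus compact level sets of $H$ imply at most one invariant measure), so your explicit assembly via irreducibility and Doob is a reasonable unpacking of that citation rather than a genuinely different proof. The one point where your justification goes astray is the role of $\DD=\partial\VV$. The result of \cite{H05} is not confined to a chain, so the ``transfer to a general graph'' is not the real obstacle, and the condition is not about avoiding ``hidden Lie brackets'': H\"ormander's condition alone does \emph{not} yield approximate controllability for an arbitrary damping/noise pattern. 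Hairer's probabilistic argument works because, for the auxiliary system in which all temperatures are set equal to $1$, there is an explicit invariant measure with full support, namely the Gibbs measure $\mu_H(dz)=Z^{-1}e^{-H(z)}dz$; the identity $\LL^\star\mu_H=0$ holds precisely because every \emph{damped} particle is also excited (fluctuation--dissipation), i.e.\ $\DD\subseteq\partial\VV$ --- note your phrase ``every noisy vertex is also damped'' states the trivial inclusion, the needed one is the converse. This is exactly the content of the Lemma and Remark in Section~\ref{section:hormander}, and it, together with the compactness of the level sets of $H$ (which you do not mention, but which holds here since the potentials are convex polynomials), is what licenses the application of Hairer's theorem; with a damped but noiseless particle ($\partial\VV\subsetneq\DD$) the argument breaks down even if the bracket condition holds, which is why the paper treats that case separately via Lasalle's principle.
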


Finally, in the more general setting where $\partial\VV\subset\DD$, we will provide a condition (see Section~\ref{section:lasalle}, Condition~\eqref{cond:stability}) on the disposition of the damped particles that entails the uniqueness of the invariant measure. We will show that this condition reduces to the asymmetric disposition of the heat baths when the potentials are harmonic. This condition will be established using a dynamic description of the diffusion through Lasalle's principle. This condition~\eqref{cond:stability} states that when the damped particles are fixed, the system can only be in the equilibrium position. Recall that $H$ is convex, thus $H$ reaches its minimum at an \emph{equilibrium point} $c_0$.
\begin{theorem}\label{thm:unique_lasalle}
When the diffusion is asymptotically strong Feller at $c_0$ and the stability condition~\eqref{cond:stability} is satisfied, the heat conduction network can have at most one invariant measure.
\end{theorem}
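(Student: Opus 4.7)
My plan is to invoke the Hairer--Mattingly uniqueness criterion: a Markov semigroup admits at most one invariant probability measure provided it is asymptotically strong Feller at some point $z_0$ and every ergodic invariant measure contains $z_0$ in its support. Since ASF at $c_0$ is one of the hypotheses, the task reduces to proving that $c_0 \in \Supp(\mu)$ for every invariant probability measure $\mu$ of the diffusion~\eqref{eq:hcn}.

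For this I would combine the Stroock--Varadhan support theorem with Lasalle's principle applied to the deterministic system obtained by setting all Brownian inputs in~\eqref{eq:hcn} to zero. Along the resulting damped Hamiltonian flow the energy satisfies $\frac{d}{dt}H = -\sum_{i\in\DD} p_i^2 \leq 0$, so $H$ plays the role of a Lyapunov function. Because $V$ and $U$ are convex polynomials, $H$ is coercive, so every zero-noise trajectory is relatively compact. Lasalle's principle then confines its $\omega$-limit set to the largest invariant subset of $\{p_i = 0 : i\in\DD\}$; by Condition~\eqref{cond:stability} this set reduces to $\{c_0\}$, so every zero-noise trajectory converges to $c_0$.

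To conclude, I would show that such a trajectory, when it starts at $z\in\Supp(\mu)$, remains in $\Supp(\mu)$ for all positive times. The Stroock--Varadhan support theorem, together with the invariance relation $P_t^\star\mu=\mu$, gives that for $\mu$-a.e.~$z$ the closure of the set of points reachable from $z$ by the control system associated with~\eqref{eq:hcn} lies in $\Supp(\mu)$; Feller regularity of the coefficients and closedness of the support then extend this inclusion to every $z\in\Supp(\mu)$. Letting $t\to\infty$ along the zero-control orbit and using that $\Supp(\mu)$ is closed yields $c_0\in\Supp(\mu)$, which is what is needed to apply Hairer--Mattingly.

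I expect the most delicate step to be precisely this last extension from an almost-sure statement (the zero-control orbit stays in $\Supp(\mu)$ for $\mu$-generic initial conditions) to a pointwise one valid on the entire closed support, as it is where continuity of the flow and Feller regularity must be carefully combined. By comparison, the Lasalle step is fairly routine once Condition~\eqref{cond:stability} is interpreted as an invariance condition on the set $\{p_i=0:i\in\DD\}$, and the coercivity of $H$ required to apply it is immediate from convexity of the polynomial potentials.
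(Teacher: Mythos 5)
Your proposal is correct and follows the paper's overall skeleton: the Hairer--Mattingly criterion (asymptotic strong Feller at $c_0$ plus $c_0$ lying in the support of every ergodic invariant measure) combined with Lasalle's principle and the Stroock--Varadhan support theorem. The difference lies in how you place $c_0$ in $\Supp\,\mu$. The paper fixes a ball $\BB_R$ of positive $\mu$-measure, uses upper semicontinuity of hitting times of the open sublevel set $K_\eta$ to get a \emph{uniform} time $T=\sup_{z\in\BB_R}T_z<\infty$, deduces $P_T(z,K_\eta)>0$ for all $z\in\BB_R$ from the support theorem with the trivial control, and concludes via the invariance identity $\mu(K_\eta)=\int P_T(z,K_\eta)\,\mu(dz)>0$; no regularity of the semigroup beyond invariance is needed. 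You instead prove that $\Supp\,\mu$ is forward invariant under the zero-control (damped deterministic) flow --- the step you rightly flag as delicate: for an open $U$ disjoint from $\Supp\,\mu$ one has $\int P_t(\cdot,U)\,d\mu=0$, and the Feller property makes $\{P_t(\cdot,U)>0\}$ open and hence disjoint from $\Supp\,\mu$, which upgrades the a.e.\ statement to every $z\in\Supp\,\mu$ --- and then send $t\to\infty$ using Lasalle and closedness of the support. Your route avoids the uniform hitting time and its semicontinuity justification, at the price of invoking the Feller property of $(P_t)$ (available here, since the coefficients are polynomial and $H$ serves as a Lyapunov function preventing explosion). One small caveat: convexity of $U,V$ alone does not give coercivity of $H$ (an affine $V$ is convex); boundedness of the sublevel sets, which Lasalle requires, rests on the paper's standing assumption that $H$ is strictly convex with a unique minimizer $c_0$, an assumption your argument should cite in the same way the paper does.
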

Notice that this condition does not dependent on the temperatures of the heat baths.  We will discuss the asymptotic strong Feller condition in Appendix~\ref{section:asf}. We will prove that this regularity condition is satisfied in the harmonic setting as soon as the graph is asymmetric. We could not reach the generality of anharmonic potentials but we expect that this property is still true in this setting.

The importance of the stability condition is strengthened by the following fact about existence of the invariant measure. In~\cite{RBT02}, it is shown that when the pinning potential is weaker than the interaction potential and the network is a chain, there exists an invariant measure. To generalize the method to general networks, we will also have to suppose that the stability condition is satisfied for a limit Hamiltonian (see Section~\ref{section:existence}, Condition~\eqref{cond:rigidite}). We will call this condition the \emph{rigidity condition}. Let $u$ (resp. $v$) be the degree of the polynomial $U$ (resp. $V$).
\begin{theorem}\label{thm:existence}
If the rigidity condition~\eqref{cond:rigidite} is satisfied and $u\geq v$, i.e. the interaction is stronger than the pinning, then there exists a unique invariant measure.
\end{theorem}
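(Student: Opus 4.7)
The plan is to prove existence via a Lyapunov / Hasminskii type argument and then invoke Theorem~\ref{thm:unique_lasalle} for uniqueness. Uniqueness is the easy half: the rigidity condition~\eqref{cond:rigidite} is a strengthening of the stability condition~\eqref{cond:stability} (it imposes stability for a limiting Hamiltonian, hence in particular for $H$ itself), and together with the asymptotic strong Feller property at the equilibrium point $c_0$ Theorem~\ref{thm:unique_lasalle} gives at most one invariant measure. The substance of the proof is therefore existence, i.e.\ exhibiting a Lyapunov function $W:\R^n\to[1,\infty)$ with $\LL W\leq -cW+K$ outside a compact set, which yields tightness of $\{\frac1t\int_0^tP_s^\star\delta_z\,ds\}$ and hence, by Krylov--Bogoliubov, an invariant probability measure.

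Following \cite{RBT02} I would take as Lyapunov function a perturbed energy
$$W(q,p)=\bigl(H(q,p)+c_1\bigr)^\theta-\gamma\sum_{i\in\VV}a_i(q,p),$$
where $\theta>0$ is small, $\gamma>0$ is small, and the correction $\sum a_i$ is built from bilinear expressions $q_i p_i$ and, more importantly, from terms that couple a damped vertex to its neighbours along paths in the graph, so as to produce, after applying $\LL$, negative contributions $-|p_i|^2$ not only at damped sites but at enough sites to control the full momentum. Schematically
$$\LL W\leq \theta(H+c_1)^{\theta-1}\Bigl(-\sum_{i\in\DD}p_i^2+\sum_{i\in\partial\VV}T_i\Bigr)-\gamma\,\Phi(q,p)+\text{lower order},$$
where $\Phi$ is a non-negative quadratic form in $p$ plus a positive form in the ``forces'' $\partial_{q_i}H$ at the damped sites, produced by the $\{H,a_i\}$ part of the Poisson bracket.

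The decisive step is to show that, at infinity in $(q,p)$, the quantity $\Phi$ dominates $H^\theta$. For this one rescales: for $\lambda\to\infty$ set $(q,p)=(\lambda^\alpha\tilde q,\lambda^\beta\tilde p)$ with exponents chosen so that $\lambda^{-2\beta}H(\lambda^\alpha\tilde q,\lambda^\beta\tilde p)$ stays of order one. Because $u\ge v$, the pinning $V$ is subdominant in this limit, and $H$ converges (after normalisation) to a \emph{limit Hamiltonian} $H_\infty$ involving only the interaction $U$ along the edges; the correction $\Phi$ converges to the corresponding limit $\Phi_\infty$. The rigidity condition~\eqref{cond:rigidite} is precisely the assertion that, for the dynamics governed by $H_\infty$, if the damped momenta and forces vanish identically then the whole trajectory sits at the (translational) equilibrium of $H_\infty$. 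Equivalently, on the unit sphere $\{H_\infty=1\}$ the set $\{\Phi_\infty=0\}$ is empty, so $\Phi_\infty\ge \eta>0$ there by compactness, and by homogeneity $\Phi\gtrsim H^{\kappa}$ at infinity for some $\kappa$. Choosing $\theta$ small enough that $\theta<\kappa$, the $\gamma\Phi$ term swallows the $\theta H^{\theta-1}\cdot H$ contribution and $\LL W\to-\infty$ along every sequence $|z|\to\infty$.

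The main obstacle I anticipate is the construction of the corrector $\sum a_i$ that, on a general graph, produces at each non-damped site a momentum penalty after applying $\LL$; on a chain \cite{RBT02} one cascades from the endpoint inwards, but for an arbitrary connected graph one must propagate the damping along a spanning subgraph rooted at $\DD$, and control the cross terms carefully. The rest (existence of $W$, Krylov--Bogoliubov, and matching with Theorem~\ref{thm:unique_lasalle}) is then routine.
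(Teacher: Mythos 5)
Your existence scheme is genuinely different from the paper's, and it has a gap at exactly the point you flag as your ``main obstacle,'' together with one step that is asserted but not valid as stated. The rigidity condition~\eqref{cond:rigidite} is a statement about \emph{trajectories} of the constrained limit dynamics: the only solution of the $H_\infty$-Hamiltonian flow along which $p_i\equiv 0$ for all $i\in\DD$ \emph{identically in time} is $z_t\equiv c_0$. Your sentence ``equivalently, on the unit sphere $\{H_\infty=1\}$ the set $\{\Phi_\infty=0\}$ is empty'' converts this into a \emph{pointwise} positivity statement for a static functional built from a finite family of correctors $\sum_i a_i$. That conversion is not automatic; it would require the correctors to encode enough iterated derivatives along the flow (a finite cascade propagating the dissipation from $\DD$ through the whole graph) so that vanishing of $\Phi_\infty$ at a single phase point forces the trajectory through that point to have $p_\DD\equiv 0$. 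Nothing in the hypotheses guarantees such a finite cascade exists for a general graph and anharmonic $U$ under the rigidity condition alone, and you do not construct it -- so the decisive inequality $\Phi\gtrsim H^{\kappa}$ at infinity is unproved. Your uniqueness remark has a related problem: rigidity concerns $H_\infty$, in which the pinning (when $u>v$) and all lower-order terms of $U$ have been discarded, so it does not ``in particular'' yield the stability condition~\eqref{cond:stability} for $H$ itself; moreover Theorem~\ref{thm:unique_lasalle} also requires the asymptotic strong Feller property at $c_0$, which is only established in the harmonic case.

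The paper bypasses the corrector construction entirely. It keeps the unperturbed Lyapunov function $W=e^{\beta H}$ with $0<\beta<\max(T_i,\,i\in\partial\VV)^{-1}$ and, instead of a pointwise drift inequality, proves the probabilistic bound
$$\frac{P_tW(z)}{W(z)}\leq e^{C\beta t\sum_iT_i}\,\E_z\Big[\exp\Big(-C\int_0^t\sum_{i\in\DD}p_i^2\,ds\Big)\Big]^{1/b}$$
via It\^o's formula, exponential martingales and H\"older's inequality. Existence then follows from a Krylov--Bogoliubov lemma once the right-hand side is shown to vanish along any sequence with $H(z_n)\to\infty$; this is done by rescaling the dynamics (here is where $u\geq v$ enters) so that it converges to the noise-free $H_\infty$-flow, where the rigidity condition is used exactly in its trajectory form -- every solution on the shell $\{H_\infty=1\}$ satisfies $\int_0^t\sum_{i\in\DD}p_i^2\,ds>0$ -- and a compactness argument (Lemma~5.2 of \cite{C07}) transfers this to the prelimit expectations. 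To salvage your route you would have to either construct the cascade of correctors under an algebraic condition genuinely stronger than rigidity, or switch to this trajectory-based scaling argument.
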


Section~\ref{section:harmonic} is devoted to the proofs of Theorems~\ref{thm:unique_harmonic} and~\ref{thm:support_harmonic}. In Section~\ref{section:uniqueness}, we will briefly prove Theorem~\ref{thm:unique_control} using results obtained by M.~Hairer in~\cite{H08}. Then, we will present Lasalle's principle and the associated stability condition. Finally, we will present the proof of the existence of the invariant measure when interaction is stronger than pinning in Section~\ref{section:existence}.

\section{The harmonic case}\label{section:harmonic}
When the potentials $U,\,V$ are harmonic, the system~\eqref{eq:hcn} of stochastic differential equations is linear. We describe the network using the adjacency matrix $\Lg=(\dg_{i\sim j})_{i,j}$ and the degree matrix $D=\diag(\sum_{j\sim i}1)$. The Laplace operator $\Dg$ on the graph $G$ is $\Dg=D-\Lg$. We denote $\Gg=I+\Dg$, $I_\DD=\diag(\indicatrice{i\in\DD})$ and $T_{\partial\VV}=\diag(\sqrt{2 T_i}\indicatrice{i\in\partial\VV})$. Then, the stochastic differential equation~\eqref{eq:hcn} can be written:
\begin{equation}\label{eq:harmonic_hcn}
dZ_t=MZ_t\,dt+\sg\,dB_t,
\end{equation}
with
$$M=\left(\begin{array}{cc}
0&I\\
-\Gg&-I_\DD
\end{array}\right),\qquad
\sg=\left(\begin{array}{cc}
0&0\\
0&T_{\partial\VV}
\end{array}\right).$$

\subsection{Existence and Uniqueness:~First part of Theorem~\ref{thm:unique_harmonic}}
Usually, the existence of the invariant measure is obtained via a compactness property. We provide a new proof in the harmonic case using the fixed point theorem on complete spaces. We could not generalize this argument to a more general setting.

We say that a matrix $M$ is \emph{stable} if its eigenvalues have strictly negative real part. We begin with two linear algebra lemmas proving that when the matrix $\sg$ is null, the noise-free system is contracting if and only if the asymmetry condition $\dim\,\EE_{M,\DD}=n$ is satisfied.

\begin{lemma}[Projection]\label{lem:projection} Let $q\in\R^N$ and $\dim\,\EE_{M,\DD}=n$. If for every $k\in\N$, $I_\DD\Gg^kq=0$ then $q=0$.
\end{lemma}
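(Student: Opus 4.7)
My plan is to translate the hypothesis $I_\DD\Gg^k q=0$ into an orthogonality statement in $\R^N$ and compare it against a subspace that contains the projection of the controllability space $\EE_{M,\DD}$.

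First I would observe that $\Gg=I+\Dg$ is symmetric, so the scalar condition $(\Gg^k q)_i = 0$ for every $i\in\DD$ and every $k\in\N$ is the same as $q\perp \Gg^k e_i$ for all such $i,k$. Introducing the subspace
$$Q_\DD := \mathrm{Span}\left\{\Gg^k e_i\,:\,i\in\DD,\,k\in\N\right\}\subset\R^N,$$
the hypothesis is therefore equivalent to $q\perp Q_\DD$. Hence it suffices to prove that $Q_\DD=\R^N$.

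The key step is to show the inclusion $\EE_{M,\DD}\subseteq Q_\DD\times Q_\DD\subset \R^N\times\R^N$. I would do this by induction on $k$, using that for any $(x,y)\in\R^N\times\R^N$ one has $M(x,y)=(y,-\Gg x-I_\DD y)$. The generators $e_{i+N}$ with $i\in\DD$ correspond to $(0,e_i)$ with $e_i\in Q_\DD$ (take $k=0$). For the inductive step, if $(x,y)\in Q_\DD\times Q_\DD$ I need each component of $M(x,y)$ to lie in $Q_\DD$: the first coordinate $y$ is in $Q_\DD$ by hypothesis; for the second, $\Gg x\in Q_\DD$ because $Q_\DD$ is $\Gg$-invariant by definition, and $I_\DD y=\sum_{i\in\DD}y_i\,e_i$ lies in $Q_\DD$ because each $e_i$ with $i\in\DD$ is in $Q_\DD$. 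This is the only step that uses a non-trivial property of $I_\DD$; everything else is bookkeeping.

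Once the inclusion is established, the hypothesis $\dim\EE_{M,\DD}=n=2N$ forces $\dim(Q_\DD\times Q_\DD)\geq 2N$, hence $\dim Q_\DD=N$ and $Q_\DD=\R^N$. Combined with the first paragraph, $q\perp \R^N$, so $q=0$. I expect no real obstacle here; the only delicate point is recognizing that $I_\DD$ sends $\R^N$ into $Q_\DD$ (and not merely into some unrelated coordinate subspace), which is what couples the momentum and position components along the orbit of $M$.
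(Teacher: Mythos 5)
Your proof is correct, and it is a tightened variant of the paper's argument rather than a genuinely different one in its core computation. Both rest on the same observation: since $\Gg$ is symmetric, the hypothesis $I_\DD\Gg^kq=0$ for all $k$ is exactly the statement that $q$ is orthogonal to every $\Gg^ke_i$ with $i\in\DD$, i.e.\ to your space $Q_\DD$. Where you diverge is in how the hypothesis $\dim\EE_{M,\DD}=n$ is cashed in. The paper concludes in one stroke by writing an arbitrary $v\in\EE_{M,\DD}$ as $\sum_{i\in\DD,k}\lg_{ik}\Gg^ke_i$ and declaring $q\in\EE_{M,\DD}^\bot=\{0\}$; taken literally this is an abuse, since $\EE_{M,\DD}$ sits in $\R^n=\R^{2N}$ while the $\Gg^ke_i$ live in $\R^N$, and the blocks of $M^ke_{i+N}$ are words in both $\Gg$ and $I_\DD$, not pure powers of $\Gg$. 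Your detour through the inclusion $\EE_{M,\DD}\subseteq Q_\DD\times Q_\DD$ (using that $Q_\DD$ is $\Gg$-invariant and that the range of $I_\DD$ is contained in $Q_\DD$), followed by the dimension count $2N=\dim\EE_{M,\DD}\leq 2\dim Q_\DD$, repairs exactly this point: it converts the asymmetry hypothesis into the clean statement $Q_\DD=\R^N$, after which $q\perp\R^N$ forces $q=0$. The paper's shortcut buys brevity; your version buys a proof in which every object lives in the right space and the role of $I_\DD$, coupling the position and momentum blocks along the orbit of $M$, is made explicit.
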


\begin{proof}
Let $v\in \EE_{M,\DD}$. There exist $\lg_{ik}$ such that $v=\sum_{i\in\DD,k}\lg_{ik}\Gg^k e_i$. Thus,
\begin{eqnarray*}
\langle q,v\rangle&=&\langle q,\sum_{i\in\DD,k}\lg_{ik}\Gg^k e_i\rangle\\
  &=&\sum_{i\in\DD,k}\lg_{ik}\langle\Gg^k q,e_i\rangle\\
  &=&0
\end{eqnarray*}
and $q\in \EE_{M,\DD}^\bot$. Finally, since $\EE_{M,\DD}^\bot=\{0\}$, we get $q=0$.
\end{proof}

\begin{lemma}[Stability]\label{lem:stability}
If $\dim\,\EE_{M,\DD}=n$ then $M$ is stable. 
\end{lemma}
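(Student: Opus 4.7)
The plan is to determine the spectrum of $M$ directly. Let $\lambda$ be any eigenvalue of $M$ with associated (possibly complex) eigenvector $(q^T,p^T)^T \neq 0$. The block form of $M$ immediately gives $p = \lambda q$; necessarily $q \neq 0$ (otherwise $p=0$), and substituting into the second block yields the quadratic eigenvalue identity
\[
(\lambda^2 I + \lambda I_\DD + \Gg)\,q = 0.
\]
Taking the Hermitian inner product with $q$ reduces everything to a scalar quadratic in $\lambda$,
\[
a\,\lambda^2 + b\,\lambda + c = 0,\qquad a = \|q\|^2,\ b = \|I_\DD q\|^2,\ c = \langle \Gg q,q\rangle,
\]
where $a>0$, $b\geq 0$, and $c>0$ since $\Gg = I + \Dg \succeq I$.

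If $b > 0$, a routine discussion of this real-coefficient quadratic gives $\Re\lambda < 0$: real roots are negative because their product $c/a$ is positive and their sum $-b/a$ is negative, while complex conjugate roots have real part $-b/(2a) < 0$. The substance of the lemma is therefore to rule out $b = 0$. Assuming $b = 0$, the vector $q$ vanishes on $\DD$ and the eigenvalue identity collapses to $\Gg q = -\lambda^2 q$, so $q$ is an eigenvector of $\Gg$ with eigenvalue $c/a > 0$. Iterating, $\Gg^k q = (c/a)^k q$ and hence
\[
I_\DD \Gg^k q = (c/a)^k\, I_\DD q = 0 \qquad \text{for every } k \in \N.
\]

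Splitting $q$ into real and imaginary parts, $q = q_R + i q_I$, and using that $I_\DD$ and $\Gg$ are real matrices, one gets $I_\DD \Gg^k q_R = I_\DD \Gg^k q_I = 0$ for every $k$. The Projection Lemma then forces $q_R = q_I = 0$, contradicting $q \neq 0$. Hence $b>0$, every eigenvalue of $M$ has strictly negative real part, and $M$ is stable. The only delicate point is bridging the real-vector hypothesis of the Projection Lemma with a complex eigenvector, which is handled by real-linearity of $I_\DD \Gg^k$; the remaining ingredients are the elementary quadratic analysis and the strict positive-definiteness $\Gg \succeq I$.
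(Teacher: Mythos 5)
Your proof is correct and follows essentially the same route as the paper: substitute $p=\lambda q$, take the scalar product with $q$ to get the quadratic $\|q\|^2\lambda^2+\|I_\DD q\|^2\lambda+\langle\Gg q,q\rangle=0$, and invoke the Projection Lemma to rule out $\|I_\DD q\|=0$. Your write-up is in fact slightly tidier on two points the paper glosses over: when $I_\DD q=0$ you observe that $q$ becomes a genuine eigenvector of $\Gg$, so $I_\DD\Gg^k q=0$ follows immediately without the paper's induction on both $q$ and $p$, and you explicitly bridge the complex eigenvector with the real-vector hypothesis of the Projection Lemma by splitting into real and imaginary parts.
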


\begin{proof} We have to prove that under the asymmetry condition, the eigenvalues of $M$ have strictly negative real part.\\
The eigenvector $(q,p)\neq 0$ associated to the eigenvalue $\lg$ satisfies
$$\left\{\begin{array}{rcl}
p&=&\lg q\\
-\Gg q-I_\DD p&=&\lg p.
         \end{array}\right.$$
Replacing $p$ by $\lg q$ and doing scalar product with $q$, we get
$$\lg^2\|q\|^2+\lg\|I_\DD q\|^2+\langle\Gg q,q\rangle=0.$$
Thus, the eigenvalues $\lg$ are either negative or conjugated (with strictly negative real part). However, the real part of $\lg$ is null if and only if $\|I_\DD q\|=0$. Thus $I_\DD p=0$, and $I_\DD\Gg q=0.$ To obtain a contradiction, we multiply the first equation by $\Gg$ to obtain $I_\DD\Gg p=0$. Then, by induction on $k$, $I_\DD \Gg^kq=I_\DD\Gg^kp=0$.

Finally, using projection Lemma~\ref{lem:projection}, $q=0$ and $p=0$. Since this is impossible, $M$ is stable.
\end{proof}

We now use a completeness result to prove the existence statement of Theorem~\ref{thm:unique_harmonic}.
Let $(X,d)$ be a Polish space, $\PP$ the set of probability measures on the Borel $\sg$-field of $X$. For any $\mu,\,\nu\in\PP$, we define
\begin{equation}
W(\mu,\nu)=\inf_{\GG\in\CC(\mu,\nu)}\int\int d(x,y)\,\GG(dx,dy),
\end{equation}
where $\CC(\mu,\nu)$ is the set of couplings of $(\mu,\nu)$. The functional $W$ defines the Wasserstein distance on the set $\PP_1$ of probability measures such that for one $z_0\in X$, $W(\dg_{z_0},\mu)<+\infty$. Let us recall (see e.g.~\cite{B07}) that $(\PP_1,W)$ is a complete space. In the following, we denote $d(\mu,\nu)=W(\mu,\nu)$.

\begin{proof}[Proof of the first part of Theorem~\ref{thm:unique_harmonic}]
We first prove a contraction inequality for Dirac measures. Indeed, using a trivial coupling, there exists a non-negative constant $\ag_0$ such that for any starting points $x$ and $y$:
\begin{eqnarray*}
\|P_t^\star\dg_x-P_t^\star\dg_y\|&=&\inf_{\GG\in\CC(P_t^\star\dg_x,P_t^\star\dg_y)}\int\int \|u-v\|\,\GG(du,dv)\\
  &\leq&\int\int \|u-v\|\,P_t(x,du)\otimes P_t(y,dv)\\
  &=&\E\|Z_t^x-Z_t^y\|\\
  &\leq&\|e^{Mt}x-e^{Mt}y\|\\
  &\leq&e^{-\ag_0 t}\|x-y\|,
\end{eqnarray*}
since stability Lemma~\ref{lem:stability} ensures that the matrix $M$ is stable.

We finally use a classical argument (see~\cite[Theorem~2.5]{HM06sg}) to control the distance between two distinct probability measures $\mu,\,\nu$
$$
\|P_t^\star\mu-P_t^\star\nu\|\leq e^{-\ag_0t}\|\mu-\nu\|.$$
Thus, there exists $\ag\in(0,1)$ such that
$$\|P_t^\star\mu-P_t^\star\nu\|\leq\ag\|\mu-\nu\|.$$
The functional $\mu\mapsto P_t^\star\mu$ is contracting on a complete space. Thus, there exists a unique invariant measure $\mu$ such that
$$P_t^\star\mu=\mu,$$
and $\mu$ is the unique invariant measure of diffusion~\eqref{eq:hcn}.
\end{proof}

\begin{example}
One can easily check that the network described in figure~\ref{fig:5atoms} satisfies the asymmetry condition but does not satisfy the condition described in~\cite[Section 2.3, 2.4]{MNV03}. The damped particles are drawn in black.
\begin{figure}[ht!]
 \begin{center}
  \input{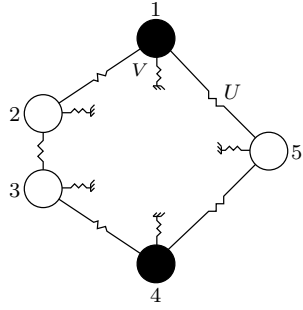}
 \end{center}
\caption{An example of an asymmetric network with $5$ particles}\label{fig:5atoms}
\end{figure}

\end{example}

\subsection{The non-uniqueness:~Second Part of Theorem~\ref{thm:unique_harmonic}}\label{section:unique_harmonic}
\begin{lemma}\label{lem:k} If the graph is not asymmetric, i.e. $\dim\,\EE_{M,\DD}\neq n$, then there exists a quantity $K$ invariant by the Hamiltonian flow and independent of the behaviour of the particles on the boundary set $\partial\VV$.
\end{lemma}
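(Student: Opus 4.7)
The plan is to construct $K$ as a quadratic form built from the orthogonal complement of the controllability space $\EE_{M,\DD}$ inside $\R^{2N}$.

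First I would determine the structure of $\EE_{M,\DD}$ in block form. Writing elements of $\R^{2N}$ as pairs $(q,p)$ and using
$$Me_{i+N}=(e_i,-e_i),\qquad M(e_i,0)=(0,-\Gg e_i)\quad(i\in\DD),$$
a short induction identifies
$$\EE_{M,\DD}=V\oplus V,\qquad V:=\mathrm{Span}\bigl\{\Gg^k e_i:\ i\in\DD,\ k\in\N\bigr\}\subset\R^N.$$
Indeed $V\oplus V$ is $M$-invariant (the only non-trivial point, $I_\DD p\in V$ for $p\in V$, follows from $e_j\in V$ whenever $j\in\DD$) and it contains all generators $(0,e_i)$, $i\in\DD$. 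Under the hypothesis $\dim\EE_{M,\DD}\neq n$ this forces $V\subsetneq\R^N$, and therefore $V^{\perp}\neq\{0\}$. Identifying $\EE_{M,\DD}=V\oplus V$ is the step I expect to require most of the care; once it is established, the rest is formal.

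Next I would exploit that $V$ is $\Gg$-invariant by construction, so by symmetry of $\Gg$ the complement $V^{\perp}$ is also $\Gg$-invariant. Let $P$ be the orthogonal projection onto $V^{\perp}$, so that $P\Gg=\Gg P$. I would then define
$$K(q,p)=\tfrac{1}{2}\|Pp\|^2+\tfrac{1}{2}\langle\Gg Pq,Pq\rangle$$
and check its conservation along the Hamiltonian flow $\dot q=p$, $\dot p=-\Gg q$. The commutation $P\Gg=\Gg P$ yields $\dot{Pq}=Pp$ and $\dot{Pp}=-\Gg Pq$, so $(Pq,Pp)$ obeys the same linear Hamiltonian system restricted to $V^{\perp}$; a one-line differentiation then gives $\dot K=0$.

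Finally I would verify the boundary condition. Because $\partial\VV\subset\DD$, every basis vector $e_i$ with $i\in\partial\VV$ already lies in $V$, hence $Pe_i=0$ and $(Pq)_i=(Pp)_i=0$ for $i\in\partial\VV$. The function $K$ therefore depends only on the coordinates $(q_j,p_j)$ with $j\notin\partial\VV$, which is exactly the independence-from-the-boundary condition of the lemma.
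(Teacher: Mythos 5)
Your proof is correct, and it rests on the same structural fact as the paper's: when the graph is not asymmetric, the position space contains a nonzero $\Gg$-invariant subspace orthogonal to every $e_i$, $i\in\DD$, and any quadratic built from it is conserved by the Hamiltonian flow and blind to the damped (hence boundary) coordinates. The realizations differ slightly. The paper takes a single eigenvector $z$ of $\Gg$ in that complement (spectral theorem for $\Gg$ restricted to $V^{\perp}$, using that $V^{\perp}$ is $\Gg$-invariant) and sets $K=\ag\langle z,q\rangle^2+\langle z,p\rangle^2$, checking $\{H,K\}=2\langle z,p\rangle\langle\ag z-\Gg z,q\rangle=0$; you instead keep the whole orthogonal projection $P$ onto $V^{\perp}$ and conserve the energy $\frac{1}{2}\|Pp\|^2+\frac{1}{2}\langle\Gg Pq,Pq\rangle$ of the unobservable modes, using only the commutation $P\Gg=\Gg P$ and no diagonalization; diagonalizing $\Gg$ on $V^{\perp}$ shows your $K$ is just a sum of invariants of the paper's form, so the two constructions are interchangeable. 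What your write-up adds is the explicit identification $\EE_{M,\DD}=V\oplus V$, which the paper uses implicitly when it speaks of an eigenvector of $\Gg$ lying in $\EE_{M,\DD}^{\bot}$; making it explicit is worthwhile, with the one caveat that the deduction $\dim\EE_{M,\DD}\neq n\Rightarrow V\subsetneq\R^N$ relies on the inclusion $V\oplus V\subseteq\EE_{M,\DD}$ (not on the minimality direction you spell out), and that inclusion does follow from the short induction you indicate via $Me_{i+N}=(e_i,-e_i)$ and $M(e_i,0)=(0,-\Gg e_i)$, removing the $I_\DD$ terms with the generators $(0,e_j)$, $j\in\DD$. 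Note finally that, since $Pe_i=0$ for every $i\in\DD$, your $K$ is in fact independent of all damped coordinates, which is exactly the stronger property the paper exploits when it perturbs the invariant measure by $e^{-\gg K}$.
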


\begin{proof} We look for a quantity $K$ such that
$$K(q,p)=\ag\langle z,q\rangle^2+\langle z,p\rangle^2,$$
where $\langle z,q\rangle=\sum_{i\in\VV}z_iq_i$ denotes the usual scalar product, $\ag$ is a real and $z$ a vector. We look for $\ag,\,z$ such that $K$ is independent of $(p_i,\,i\in\DD)$ to obtain $\LL_T K=0$ and such that $\{H,K\}=0.$ Since $\Gg$ is symmetric, we easily obtain
$$\{H, K\}=2\langle z,p\rangle\langle\ag z-\Gamma z,q\rangle.$$
Since  $\dim\,\left(\EE_{M,\DD}^\bot\right)\geq 1$, $\Gg$ has an eigenvector $z\in \EE_{M,\DD}^\bot$ with eigenvalue $\ag\in\R$. Then  $K(q,p)$ is independent of $p_i,\,i\in\DD$.
\end{proof}

\begin{proof}[Proof of the second part of Theorem~\ref{thm:unique_harmonic}]
For any smooth function $f$ in $\DD_{\LL^\star}$, we decompose the formal adjoint of $\LL$,
\begin{eqnarray*}
\frac{\LL^\star e^f}{e^f}&=& -\{H,f\}+ \sum_{i\in\DD}\left\{1+p_i\partial_{p_i}f\right\}+\sum_{i\in\partial\VV}T_i\left(\partial_{p_i}^2f+(\partial_{p_i}f)^2\right)\\
  &=:&-\{H,f\}+\LL_Tf.
\end{eqnarray*}
Let $\mu$ be an invariant measure. Using Lemma~\ref{lem:k}, there exists a polynomial function $K$ such that for any $\gg>0$, $\mu_\gg$ is invariant where
$$\mu_\gg(dz)=e^{-\gg K(z)}/Z_\gg\,\mu(dz)$$
and $Z_\gg$ is the normalizing constant.\end{proof}

\begin{example}
When the network is the diamond described in figure~\ref{fig:diamant} (the damped particles are drawn in black), we obtain the counter-example given in~\cite[Section~2.3]{MNV03}. Indeed, an eigenvector is $(0,1,0,-1)$ with eigenvalue $3$. Thus, the quantity
$$K(q,p)=3\frac{(p_2-p_4)^2}{2}+\frac{(q_2-q_4)^2}{2}$$
is invariant by the Hamiltonian flow and independent of $(q_1,q_3,p_1,p_3)$.
\begin{figure}[ht!]
 \input{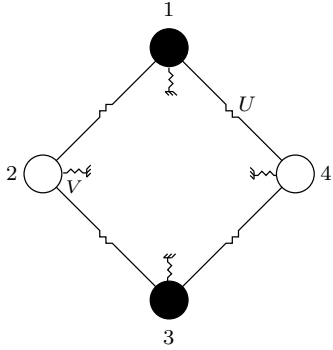}
 \caption{An example of a non-asymmetric heat conduction network}\label{fig:diamant}
\end{figure}

\end{example}

\subsection{Support of the invariant measure:~Proof of Theorem~\ref{thm:support_harmonic}}
To complete the study of the invariant measure, we propose a characterization of its support. Since the pinning and coupling potentials are harmonic, let us recall that the diffusion process starting from $z\in\R^n$ can be written at time $t>0$ by
$$Z_t^z=e^{Mt}z+\int_0^te^{M(t-s)}\sg\,dB_s.$$
Thus, the covariance $K_t$ of the Gaussian process $(Z_t^z)_t$ is $K_t=\int_0^te^{M(t-s)}\sg\sg^\star e^{M^\star(t-s)}\,ds,$ where $M^\star$ denotes the adjoint of $M$. Thus, $(K_t)_t$ satisfies the differential equation
$$\partial_tK_t=\sg\sg^\star+MK_t+K_tM^\star.$$

\begin{proof}[Proof of Theorem~\ref{thm:support_harmonic}]
Let us suppose that $\dim\,\EE_{M,\DD}=n$. Then, uniqueness Theorem~\ref{thm:unique_harmonic} states that the invariant measure is unique. Let us consider the $n\times n$ matrix $Q$ solution of \emph{Lyapunov equation}
 \begin{equation}\label{eq:lyapunov}
 MQ+QM^\star=-\sg\sg^\star.
 \end{equation}
Then, if $K_0=Q$, for any $t\geq 0$, $K_t=Q$ and $Q$ is the covariance of the invariant measure. Then, the matrix $Q$ can be written as
$$Q=\int_0^\infty e^{Mt}\sg\sg^\star e^{M^\star t}\,dt.$$
Moreover, using Lemma~2.3 in \cite{SZ70},
$$\mathrm{rank}(Q)=\dim\,\EE_{M,\partial\VV}.$$
That finishes the proof of Theorem~\ref{thm:support_harmonic}.
\end{proof}

Let us notice that the uniqueness of this solution using Theorem~2.2 in \cite{SZ70} gives an alternative proof of uniqueness Theorem~\ref{thm:unique_harmonic}. Moreover, in~\cite{EZ04}, Lyapunov equation~\eqref{eq:lyapunov} is used to compute the direction of the heat current in simple networks.
\begin{remark}
If $\dim\,\EE_{M,\partial\VV}\neq n$ then the semigroup is not strong Feller. Indeed, the linearity of this equation implies that, for any $z\in\EE_{M,\partial\VV}^\bot$, $Z_t^z\in\EE_{M,\partial\VV}^\bot$. Thus, $P_t(z,\EE_{M,\partial\VV}^\bot)=\indicatrice{\EE_{M,\partial\VV}^\bot}(z)$ is not continuous. This remark has to be linked with the full support property of the invariant measure in Theorem~\ref{thm:unique_control}.
\end{remark}

\subsection{Example}
To end this section, we present a heat conduction network where all the particles are damped and the unique invariant measure has a degenerate support.

In the case described by~figure~\ref{fig:cex}, the network is made of $6$ particles. Tedious but straightforward computations give $\dim\,\EE_{M,\DD}=12$ but $\dim\,\EE_{M,\partial\VV}=10$. The damped particles are represented by striped circles, the boundary set by black ones.
\begin{figure}[ht!]
\begin{center}
\input{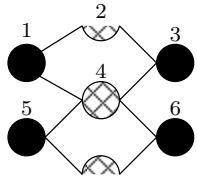}
\end{center}
\caption{An example of a heat conduction network with a unique invariant measure with degenerate support}\label{fig:cex}
\end{figure}

\begin{remark}
In view of \cite{BLLO08}, it could be interesting to wonder if there exists a heat conduction network where the solution $Q$ to the Lyapunov equation satisfies $Q_{i+N,\,i+N}=0$ for some $i\in\VV$. In this case, under the invariant measure, particle $i$ would be motionless. We couldn't find such a graph.
\end{remark}

\section{Uniqueness of the invariant measure}\label{section:uniqueness}
As we could see in the previous section, when the potentials are harmonic, the linearity allows to do explicit computations and we can describe precisely the invariant measures. When potentials are anharmonic, such a description is no more possible. However, we can generalize uniqueness results in the following way. Theorem~\ref{thm:unique_control} relies on controllability and regularity results established via Hörmander's condition. Theorem~\ref{thm:unique_lasalle} relies on Lasalle's principle. The regularity hypothesis are weaker and the temperature conditions are relaxed. As in the harmonic setting, the uniqueness of the invariant measure is related to the damped particles only.

\subsection{Hörmander's condition and uniqueness}\label{section:hormander}
In this subsection, we summarize what can be done using control. We use the strategy developed in~\cite{H05} to prove the uniqueness of the invariant measure when all the damped particles are linked to a heat bath, i.e. $\DD=\partial\VV$. We recall the following regularity property of semigroups and differential operators.

Let $\LL^\star$ be the formal adjoint of the generator $\LL$. Let $r$ be the cardinality of the boundary set $\partial\VV$, $a_0\in\R$ and $X_0,X_1,\ldots,X_r$ be the vector fields such that
$$\LL^\star=a_0+X_0+\sum_{i=1}^r X_i^2.$$
We define inductively the \emph{Lie algebra} $\Lie(z)$ at point $z\in\R^n$ as the algebra generated by
$$\{X_i\}_{i=1,\ldots,r}\,,\,\{[X_i,X_j]\}_{i,j=0,\ldots,r}\,,\,\{[X_i,[X_j,X_k]]\}_{i,j,k=0,\ldots,r}\,,\,\cdots$$
We say that \emph{Hörmander's condition} is satisfied if for every $z\in\R^n$ the lie algebra $\Lie(z)$ has full rank, i.e. $\dim\,\Lie(z)=n$.

If Hörmander's condition is satisfied then the semigroup and the invariant measures have smooth densities with respect to Lebesgue measure (see e.g.~\cite[Corollary~7.2]{RB06}).
\begin{remark}
 When the potentials are harmonic, Hörmander's condition is equivalent to an asymmetric disposition of the heat baths. More precisely, for any $z\in\R^n$, $\dim\,\EE_{M,\partial\VV}=\dim\,\EE_{M,\DD}=\dim\,\Lie(z)=n$.
\end{remark}

\begin{lemma}
When all the temperatures are equal to $1$, the Gibbs measure $\mu_H$ is a full support invariant measure, where
$$\mu_H(dz)=\tfrac{1}{Z}e^{-H(z)}\,dz$$
and $Z$ is the normalizing constant.
\end{lemma}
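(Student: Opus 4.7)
The plan is to verify that $\LL^\star e^{-H}=0$ by a direct pointwise computation, exploiting the fact that the Hamiltonian and Ornstein--Uhlenbeck pieces of $\LL$ act compatibly on the Gibbs density, and then to observe that positivity of $e^{-H}$ yields full support for free. Recall that in this subsection $\DD=\partial\VV$ and $T_i=1$, so the generator splits as
$$\LL \;=\; \{H,\cdot\} \;+\; \sum_{i\in\partial\VV}\LL_i,\qquad \LL_i \;=\; -p_i\partial_{p_i}+\partial_{p_i}^2.$$

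First, I would treat the Liouville part. The vector field $X_H=\{H,\cdot\}$ is divergence-free (standard fact about Hamiltonian vector fields), so its formal $L^2(dz)$-adjoint is $-X_H$, and $X_H(e^{-H})=-e^{-H}\{H,H\}=0$. Hence the Poisson bracket contribution to $\LL^\star e^{-H}$ vanishes identically.

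Next, I would handle the dissipative part. Because the potential energy depends only on $q$ while the kinetic energy is $\tfrac{1}{2}\sum_{j}p_j^2$, the Gibbs density factors as
$$e^{-H(q,p)} \;=\; e^{-W(q)}\prod_{j\in\VV}e^{-p_j^2/2}$$
for an appropriate $W$ (sum of pinning and interaction potentials). Each operator $\LL_i$ acts only in the single variable $p_i$, and its formal adjoint $\LL_i^\star g=\partial_{p_i}(p_i g)+\partial_{p_i}^2 g$ is the standard OU Fokker--Planck operator with invariant density $e^{-p_i^2/2}$; a one-line check gives $\LL_i^\star e^{-p_i^2/2}=0$. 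Since the other factors of $e^{-H}$ are constant in $p_i$, it follows that $\LL_i^\star e^{-H}=0$ for every $i\in\partial\VV$. Summing over $i$ and adding the Liouville contribution yields $\LL^\star e^{-H}=0$, so $\mu_H$ is invariant.

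Finally, full support is immediate: $H$ is a polynomial, hence finite everywhere on $\R^n$, so $e^{-H(z)}>0$ for every $z$, whence $\Supp(\mu_H)=\R^n$. I do not expect a real obstacle here; the only point deserving emphasis is \emph{why} the choice $T_i=1$ is forced: this value exactly matches the kinetic normalization $\tfrac12 p_i^2$ appearing in $H$, which is precisely what makes the damping term $-p_i\partial_{p_i}$ and the diffusion term $\partial_{p_i}^2$ cancel against each other when tested against $e^{-H}$. If one had $\DD\supsetneq\partial\VV$, the damped-but-unbathed sites would produce residual terms $\sum_{i\in\DD\setminus\partial\VV}(1-p_i^2)e^{-H}$ that do not vanish, explaining why the hypothesis $\DD=\partial\VV$ of this subsection is essential.
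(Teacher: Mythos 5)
Your verification is correct, and since the paper states this lemma without proof (only the remark that it is essential that all damped particles are excited), your direct computation — the divergence-free Liouville field annihilates $e^{-H}$, the adjoint Ornstein--Uhlenbeck operators kill the Gaussian factors $e^{-p_i^2/2}$ precisely because $T_i=1$, and positivity of the density gives full support — is exactly the intended standard argument. Your closing observation about the residual terms $(1-p_i^2)e^{-H}$ for $i\in\DD\setminus\partial\VV$ matches the paper's remark on why $\DD=\partial\VV$ is needed.
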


\begin{remark}
To obtain $\LL^\star\mu_H=0$ in this lemma, it is essential that all the damped particles are excited.
\end{remark}

Theorem~\ref{thm:unique_control} is then a straightforward consequence of the following theorem.
\begin{theorem}[see~\cite{H08}, Theorem~4.1] If Hörmander's condition is satisfied and the Hamiltonian $H$ has compact level sets then the diffusion can have at most one invariant probability measure.
\end{theorem}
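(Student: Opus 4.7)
The plan is to combine the hypoellipticity provided by Hörmander's condition with a topological irreducibility statement obtained from a control/support argument, and then invoke a Doob-type uniqueness criterion. The coercivity of $H$ (compact level sets) enters twice: once to prevent the controlled trajectories from escaping in finite time, and once to give a Lyapunov-type bound for the stochastic flow that rules out the loss of mass at infinity.

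First I would use Hörmander's theorem on the generator $\LL$ written in the H\"ormander form $\LL^\star = a_0 + X_0 + \sum_{i=1}^r X_i^2$: since $\Lie(z)$ has full rank at every $z\in\R^n$, $\LL$ is hypoelliptic, and consequently $P_t(z,\cdot)$ has a smooth (and, by a standard argument using the support theorem, strictly positive) density with respect to Lebesgue measure for every $t>0$ and every $z\in\R^n$. In particular $P_t$ is strong Feller for each $t>0$.

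Second, I would prove topological irreducibility: for every non-empty open set $U\subset\R^n$ and every starting point $z\in\R^n$ there exists $t>0$ with $P_t(z,U)>0$. By the Stroock--Varadhan support theorem this reduces to controllability of the deterministic system obtained from \eqref{eq:hcn} by replacing $\sqrt{2T_i}\,dB_i$ by a smooth control $u_i(s)$ acting on the boundary particles $\partial\VV$. The bracket-generating property furnished by Hörmander's condition yields local accessibility in a neighbourhood of every point, while the compact level sets of $H$ provide an a priori estimate on the controlled trajectory — when steering from $z_0$ to a target point $z_1$ over a time horizon $T$, the energy along the path can be controlled by its initial and terminal values plus the $L^2$-norm of $u$, so no blow-up occurs and local accessibility can be chained into global controllability on $\R^n$.

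Third, once $P_t$ is strong Feller and topologically irreducible, a standard Doob--Khasminskii argument yields uniqueness. Any invariant probability $\mu$ has full topological support (because $\mu(U)=\int P_t(z,U)\,\mu(dz)>0$ for any open $U$); if $\mu_1,\mu_2$ were two distinct invariant measures, ergodic decomposition plus the strong Feller property would force them to be mutually singular and yet both to have full support, a contradiction obtained by applying $P_t$ to a set separating them. The coercivity of $H$ enters to ensure that invariant probability measures actually exist in the tight sense required for this decomposition and that no escape of mass occurs along $P_t$.

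The main obstacle I expect is the controllability step: producing, for an arbitrary pair of endpoints, an explicit admissible control built from the vector fields $X_1,\dots,X_r$ acting only on the damped/boundary particles, and propagating local Hörmander accessibility to a global statement. The Hamiltonian structure makes the drift $X_0$ skew in part, so one cannot directly invert it; one must instead use iterated brackets with the noise directions, while simultaneously using the coercivity of $H$ to keep the trajectory in a fixed compact set. This is the technical heart of Hairer's argument, and everything else in the proof is a fairly standard combination of hypoellipticity and the Doob uniqueness principle.
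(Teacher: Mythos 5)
Your overall architecture (strong Feller from hypoellipticity, plus topological irreducibility, plus a Doob--Khasminskii argument) is the right general shape, but the second step is not proved, and the justification you sketch for it is precisely the point where the argument breaks down. Note first that the paper itself does not prove this statement: it is quoted from M.~Hairer (\cite{H08}), and the hard content of Hairer's proof is exactly the irreducibility/controllability step you treat as routine. The H\"ormander condition used here is the \emph{weak} bracket condition, in which iterated brackets with the drift $X_0$ are allowed; it yields hypoellipticity, smooth transition densities and the strong Feller property, but it yields neither strict positivity of the densities nor accessibility of the associated control system in all directions. Your claim that ``local accessibility can be chained into global controllability'' is not valid: reachability is not a symmetric relation, local accessibility only says reachable sets have non-empty interior, and the a priori energy bound from the compact level sets of $H$ does nothing to let you steer \emph{against} the dissipative drift when the controls act only on the momenta of the particles in $\partial\VV$. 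Classical geometric control theory simply does not give irreducibility here, and the strict positivity of the density that you invoke later (to give every invariant measure full support) is equivalent to the very controllability you have not established.

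Hairer's actual route to this point is genuinely different and probabilistic: one considers the auxiliary system in which all heat baths are at the same temperature, for which the Gibbs measure $e^{-H}/Z$ is an explicit invariant measure with full support (this is the role of the Gibbs lemma in the paper, and the reason Theorem~\ref{thm:unique_control} assumes $\DD=\partial\VV$); combining this full-support invariant measure with the strong Feller property and the compactness of the level sets of $H$, one deduces that the reachable sets of the control system are dense (the ``weak controllability'' of \cite{H05}), and only then does the support/Doob-type argument force the supports of any two ergodic invariant measures to coincide with $\R^n$ and hence the measures to agree. A further, smaller point: the theorem asserts \emph{at most} one invariant measure, so your final remark that compact level sets ``ensure that invariant probability measures actually exist'' is both off-target and unjustified --- existence does not follow from coercivity of $H$ alone, which is why the paper devotes Section~\ref{section:existence} and the rigidity condition~\eqref{cond:rigidite} to it.
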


\begin{remark}
 It is easy to see from M.~Hairer's proof and using Stroock-Varadhan's Support Theorem that the invariant measure has full support.
\end{remark}

\subsection{Lasalle's principle:~Uniqueness of the invariant measure}\label{section:lasalle}
In this section we relate the uniqueness of the invariant measure to the disposition of the damped particles. When the potentials are harmonic this condition is optimal. Contrary to Section~\ref{section:hormander} we do not suppose that all the damped particles are excited, $\partial\VV\subset\DD$.

This method is based on the contraction properties of the noise-free dynamics used in the lectures of J.~Mattingly~\cite{M07} to prove uniqueness of the invariant measure for Stochastic Navier-Stokes equations. Our aim is to prove that the contraction point is in the support of every invariant ergodic measure. Then, since ergodic measures have disjoint supports, we prove that the invariant measure is unique. To control the deterministic diffusion, we introduce Lasalle's principle.

\medskip

We would like to notice that in this section we just suppose that the semigroup is asymptotically strong Feller at the equilibrium point $c_0$ (see Appendix~\ref{section:asf} for more details).

\subsubsection{Lasalle's principle and Stroock-Varadhan's theorem}
In this section we recall Lasalle's principle. This principle is a generalization of Lyapunov's method. When the derivative of the Lyapunov function is not \emph{definite} negative, this principle says that the solutions of a differential system have an attractive point~(see~\cite[p.~198]{S99}). In the sequel, we write $\dot{H}(z_t)=\partial_t H(z_t)$.

\begin{definition}[Invariant set]
A set $A\subset\R^n$ is called \emph{invariant} if all the trajectories starting from $A$ stay in $A$, i.e. for any $z_0\in A$, for all $t\geq t_0$,
$$z_t^{z_0}\in A.$$
\end{definition}

\begin{theorem}[Lasalle's principle, see~\cite{S99}, Proposition~5.22]
Suppose there exists a function $H:\R^n\to\R_+$ of class $\mathcal{C}^1$ satisfying the following conditions: for all $a>0$,
\begin{enumerate}
\item $\Og_a=\{z;\ H(z)\leq a\}$ is bounded,
\item $\dot{H}\big|_{\Og_a}\leq 0$.
\end{enumerate}
We denote 
$$S=\left\{z\in\Og_a;\ \dot{H}(z)=0\right\}$$
and we consider the biggest invariant subset $A$ of $S$. Then, for any $z_0\in \Og_a$,
$$z_t^{z_0}\xrightarrow[t\to\infty]{} A.$$
\end{theorem}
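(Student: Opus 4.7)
The plan is to follow the classical $\omega$-limit set argument. First I would show that the forward orbit $\{z_t^{z_0} : t \geq 0\}$ stays in $\Og_a$ for any $z_0 \in \Og_a$. Since $\dot{H} \leq 0$ on $\Og_a$, the map $t \mapsto H(z_t^{z_0})$ is non-increasing as long as $z_t^{z_0}$ lies in $\Og_a$; if the orbit ever exited $\Og_a$, there would be a first exit time $t^{\star}$ with $H(z_{t^{\star}}^{z_0}) = a$ and $\dot{H}(z_{t^{\star}}^{z_0}) > 0$, contradicting hypothesis (2). Hence the orbit is trapped in the bounded (and, by continuity of $H$, closed) set $\Og_a$, which is therefore compact. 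Being non-increasing and bounded below by $0$, $H(z_t^{z_0})$ converges to some limit $c \geq 0$ as $t \to \infty$.

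Next I would introduce the $\omega$-limit set
$$\omega(z_0) := \bigcap_{T \geq 0} \overline{\{z_s^{z_0} : s \geq T\}}.$$
Standard arguments, which rely on the relative compactness established above and on the continuity of the flow $z_0 \mapsto z_t^{z_0}$, show that $\omega(z_0)$ is non-empty, compact, invariant, and that $d(z_t^{z_0}, \omega(z_0)) \to 0$ as $t \to \infty$. By continuity of $H$, every point of $\omega(z_0)$ has $H$-value equal to $c$; since $\omega(z_0)$ is invariant, $H$ stays constantly equal to $c$ along every trajectory issued from a point of $\omega(z_0)$, so $\dot{H} \equiv 0$ on $\omega(z_0)$, i.e. $\omega(z_0) \subset S$. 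Being invariant and contained in $S$, the set $\omega(z_0)$ is included in the maximal invariant subset $A$ of $S$, and therefore $d(z_t^{z_0}, A) \leq d(z_t^{z_0}, \omega(z_0)) \to 0$, which is the claim.

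The delicate ingredient is the invariance and attractiveness of $\omega(z_0)$. These properties are routine for a continuous flow with relatively compact forward orbits, and conditions (1)--(2) are precisely tailored to provide this compactness, so no extra regularity or completeness of the flow outside $\Og_a$ is needed. The one subtlety to keep in mind is that the theorem must be read uniformly in $a > 0$: the set $A$ depends implicitly on the sublevel we work in, but since any $z_0 \in \R^n$ lies in $\Og_{H(z_0)}$, the argument applies to every initial condition.
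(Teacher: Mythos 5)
Your argument is correct: it is the classical proof of LaSalle's invariance principle via the $\omega$-limit set (forward invariance of the sublevel set, monotone convergence of $H$ along the orbit, then non-emptiness, compactness, invariance and attractivity of $\omega(z_0)$, constancy of $H$ on it, hence $\omega(z_0)\subset S$ and, by maximality, $\omega(z_0)\subset A$). The paper does not prove this statement at all — it is recalled verbatim with a citation to the reference \cite{S99} — so there is nothing to compare beyond noting that your proof is the standard textbook one; the only informal step, the ``first exit time'' argument for trapping the orbit, is harmless here since the hypotheses hold for every $a>0$, so $\dot H\leq 0$ on all of $\R^n$ and $H(z_t^{z_0})$ is globally non-increasing.
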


\begin{remarks}\begin{itemize}
\item When $S$ is reduced to an equilibrium point and $\Og_a\uparrow \R$, this principle implies that for any starting point, the trajectory converges to the equilibrium point.
\item As the function $\dot{H}$ is non-positive on $\Og_a$, $\Og_a$ is invariant.
\item In the Stochastic Navier-Stokes equations (see~\cite{M07}), one can compute the speed of decrease to the equilibrium point of the deterministic system via a Gronwall inequality. In the heat conduction networks we couldn't establish such a result.
\end{itemize}
\end{remarks}

Finally, we recall Stroock-Varadhan's support Theorem. We consider a stochastic system
$$dZ_t=F(Z_t)\ dt+\sg(Z_t)\circ\ dB_t,$$
where $F,\ \sg$ are smooth functions and $\circ$ denotes the Stratonovitch integral. For every $z_0\in\R^n,\, t_0>0$, let
$$\mathcal{S}_{t_0,z_0}=\left\{z_{t_0};\ \exists\psi\in\mathcal{C}^-,\ z_t=z_0+\int_0^t F(z_s)\ ds + \int_0^t \sg(z_s)\psi(s)\ ds\right\},$$
where $\mathcal{C}^{-}$ stands for the set of piecewise continuous functions from $[0,\infty)$ to $\R^n$.\\
We recall that the \emph{support} $\Supp\,\mu$ of a measure $\mu$ is the set of points $z\in\R^n$ such that for any ball $\BB(z_0,\eg)$ of radius $\eg>0$ centred in $z_0$, $\mu\left(\BB(z_0,\eg)\right)>0$.

\begin{theorem}[Support Theorem, see~\cite{SV72}, Section~5]\label{thm:SV}
Using the previous notations,
$$\Supp\,P_{t_0}(z_0,\cdot)=\overline{\mathcal{S}_{t_0,z_0}}.$$
\end{theorem}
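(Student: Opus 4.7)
The plan is to prove the two inclusions $\overline{\mathcal{S}_{t_0,z_0}}\subset\Supp\,P_{t_0}(z_0,\cdot)$ and $\Supp\,P_{t_0}(z_0,\cdot)\subset\overline{\mathcal{S}_{t_0,z_0}}$ separately, following Stroock and Varadhan.

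For the first inclusion, fix a piecewise continuous control $\psi\in\mathcal{C}^-$ and the resulting skeleton trajectory $z^\psi$ with endpoint $z^\psi_{t_0}$. I would apply Girsanov's theorem with drift $\psi$: under the equivalent measure $\mathbb{Q}$ with density $\exp\bigl(\int_0^{t_0}\psi\cdot dB-\tfrac{1}{2}\int_0^{t_0}|\psi|^2\,ds\bigr)$, the shifted process $\widetilde{B}_t=B_t-\int_0^t\psi(s)\,ds$ is a Brownian motion, and $Z$ satisfies
$$dZ_t=\bigl(F(Z_t)+\sigma(Z_t)\psi(t)\bigr)\,dt+\sigma(Z_t)\circ d\widetilde{B}_t.$$
A Gronwall-type pathwise estimate, valid on events where $\widetilde{B}$ stays uniformly small on $[0,t_0]$, shows that $\|Z-z^\psi\|_\infty<\varepsilon$ with positive $\mathbb{Q}$-probability. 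Since $d\mathbb{Q}/d\mathbb{P}$ is strictly positive and integrable, the same event has positive $\mathbb{P}$-probability; hence $z^\psi_{t_0}\in\Supp\,P_{t_0}(z_0,\cdot)$, and taking closures completes this half.

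For the reverse inclusion, I would use a Wong--Zakai-type approximation. Replace $B$ by its piecewise linear interpolant $B^n$ on a mesh of step $t_0/n$; since $\dot{B}^n$ is piecewise continuous, the solution $Z^n$ of the random ODE $\dot Z^n_t=F(Z^n_t)+\sigma(Z^n_t)\dot B^n(t)$ satisfies $Z^n_{t_0}\in\mathcal{S}_{t_0,z_0}$ almost surely. The Wong--Zakai theorem yields $Z^n_{t_0}\to Z_{t_0}$ in probability, so along a subsequence this convergence is almost sure, giving $Z_{t_0}\in\overline{\mathcal{S}_{t_0,z_0}}$ almost surely. Consequently the law of $Z_{t_0}$ is supported in this closed set.

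The main obstacle is the Wong--Zakai convergence, which is precisely what forces the Stratonovich convention in the statement: the piecewise linear interpolation of $B$ produces a corrector matching the Stratonovich-to-It\^o conversion term $\tfrac{1}{2}\sum_i(\sigma_i\cdot\nabla)\sigma_i$, so that the ODE limit coincides with the Stratonovich SDE rather than the It\^o one. A secondary subtlety is the pathwise continuity estimate used in the Girsanov step, which relies on the smoothness of $F$ and $\sigma$ to guarantee that small uniform perturbations of the driving path produce small uniform perturbations of the trajectory on $[0,t_0]$.
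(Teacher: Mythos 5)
You should first be aware that the paper contains no proof of this statement at all: it is quoted from Stroock and Varadhan~\cite{SV72} and only its conclusion is used (in Proposition~\ref{prop:prop}, with the trivial control $\psi\equiv 0$). So there is no internal argument to compare with; what can be judged is whether your sketch would reconstruct the cited result. Your architecture is the classical one --- Girsanov to realize an arbitrary control for the inclusion $\overline{\mathcal{S}_{t_0,z_0}}\subset\Supp\,P_{t_0}(z_0,\cdot)$, polygonal (Wong--Zakai) approximation of the Brownian path for the reverse inclusion --- and the second half is essentially correct, modulo the regularity and non-explosion hypotheses needed for the Wong--Zakai limit, with the Stratonovich convention playing exactly the role you describe.

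The genuine gap is in the first half. After the Girsanov change of measure you assert that a ``Gronwall-type pathwise estimate'' on the event where $\widetilde{B}$ is uniformly small forces $\|Z-z^\psi\|_\infty<\varepsilon$, i.e.\ that small uniform perturbations of the driving path yield small uniform perturbations of the solution. For a state-dependent diffusion coefficient $\sigma(Z_t)$ this is false: the solution map is not continuous in the uniform topology (small-amplitude, rapidly oscillating perturbations of the driver can produce order-one L\'evy-area corrections), and the term $\int_0^t\sigma(Z_s)\circ d\widetilde{B}_s$ is not pathwise small merely because $\|\widetilde{B}\|_\infty$ is. Proving that, conditionally on the Brownian path being uniformly small, the solution stays close to the skeleton with positive probability is precisely the technical heart of Stroock--Varadhan's proof (their conditional estimates, or an iterated integration by parts exploiting the Stratonovich chain rule and the smoothness of $\sigma$); your sketch waves this away, and your closing remark that it ``relies on the smoothness of $F$ and $\sigma$ to guarantee that small uniform perturbations of the driving path produce small uniform perturbations of the trajectory'' is exactly the claim that fails in general. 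Note that in the only situation where the paper invokes the theorem the noise matrix is constant, and there your argument does work verbatim ($Y=Z-\sigma\widetilde{B}$ solves a random ODE and Gronwall applies); but as a proof of the theorem as stated, for general smooth $\sigma(Z_t)$, the key step is missing.
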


\subsubsection{Uniqueness of the invariant measure}

Recall that the Hamiltonian $H$ has a unique minimizer $c_0$. By adding a constant term, we can assume that $H(c_0)=0$. Diffusion~\eqref{eq:hcn} is said to satisfy \emph{stability condition} if the deterministic system
\begin{equation}\label{cond:stability}
\left\{\begin{array}{ccl}
\dot{q}_i&=&p_i\\
\dot{p}_i&=&-\partial_{q_i}H\\
p_i\indicatrice{i\in\DD}&\equiv&0\\
\end{array}\right.
\end{equation}
has a unique solution given by $z_t\equiv c_0$.

\medskip

In our way to prove Theorem~\ref{thm:unique_lasalle}, we are going to show the following theorem.
\begin{theorem}\label{thm:equilibre}
If the stability condition~\eqref{cond:stability} is satisfied then for every invariant measure $\mu$
$$c_0\in \Supp\,\mu.$$
\end{theorem}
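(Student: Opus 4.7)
The plan is to combine Lasalle's principle applied to the deterministic dynamics associated with~\eqref{eq:hcn} with the Stroock-Varadhan support theorem, using the invariance of $\mu$.

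First, I would apply Lasalle's principle with the Hamiltonian $H$ as Lyapunov function. The sublevel sets $\Og_a=\{H\leq a\}$ are bounded because $H$ is a convex polynomial minimized (uniquely) at $c_0$. Differentiating $H$ along the noise-free version of~\eqref{eq:hcn} one finds
$$\dot{H}=\sum_{i\in\VV}\bigl(\partial_{q_i}H\,\dot{q}_i+p_i\,\dot{p}_i\bigr)=-\sum_{i\in\DD}p_i^2\leq 0,$$
so that $S=\{\dot{H}=0\}=\{z:\,p_i=0\text{ for all }i\in\DD\}$. On any trajectory contained in $S$, the damping term $p_i\indicatrice{i\in\DD}$ in the momentum equation vanishes identically, and the dynamics reduces to the pure Hamiltonian flow under the constraint $p_i|_\DD\equiv 0$, which is exactly system~\eqref{cond:stability}. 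The stability hypothesis then forces the only such trajectory to be $z\equiv c_0$. Hence the largest invariant subset of $S$ is $\{c_0\}$, and Lasalle yields that the deterministic flow $\Phi_t(z)$ converges to $c_0$ as $t\to\infty$, for every starting point $z\in\R^n$.

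Second, since the diffusion matrix $\sg$ in~\eqref{eq:hcn} is constant, It\^o and Stratonovich formulations coincide, and the choice $\psi\equiv 0$ in the definition of $\mathcal{S}_{t,z}$ produces precisely the noise-free trajectory. Thus $\Phi_t(z)\in\mathcal{S}_{t,z}$ for every $t>0$, and by Support Theorem~\ref{thm:SV}, $\Phi_t(z)\in\Supp\,P_t(z,\cdot)$. In particular, as soon as $\Phi_T(z)$ lies in the open ball $\BB(c_0,\eg)$, the point $\Phi_T(z)$ is in the support of the open set $\BB(c_0,\eg)$'s neighborhood system, so $P_T(z,\BB(c_0,\eg))>0$.

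Third, I would argue by contradiction. Assume $c_0\notin\Supp\,\mu$ and pick $\eg>0$ with $\mu(\BB(c_0,\eg))=0$. Invariance of $\mu$ gives, for every $T>0$,
$$0=\mu(\BB(c_0,\eg))=\int P_T(z,\BB(c_0,\eg))\,\mu(dz),$$
so the set $N_T=\{z:\,P_T(z,\BB(c_0,\eg))>0\}$ is $\mu$-null. Taking a countable union over rational $T>0$, the set $N=\bigcup_{T}N_T$ still satisfies $\mu(N)=0$. On the other hand, for any $z\in\R^n$ the first two steps provide a time with $\Phi_{T_z}(z)\in\BB(c_0,\eg)$, and by continuity of $t\mapsto\Phi_t(z)$ this $T_z$ can be chosen rational, whence $z\in N$. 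This forces $N=\R^n$, contradicting $\mu(N)=0<1=\mu(\R^n)$.

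The main obstacle is the Lasalle step, specifically verifying that the largest invariant subset of $S$ reduces to $\{c_0\}$: one must observe that on such an invariant set the dissipative term automatically disappears, so an invariant trajectory staying in $S$ satisfies \emph{exactly} the constrained Hamiltonian system~\eqref{cond:stability} and the stability hypothesis applies verbatim. The remaining steps are a fairly standard ``flush to the attractor and upgrade via Stroock-Varadhan'' scheme, together with the countable-union trick to transfer the pointwise deterministic convergence into a $\mu$-almost-sure positivity of $P_T(z,\BB(c_0,\eg))$.
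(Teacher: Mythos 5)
Your proposal is correct, and its core is the same as the paper's: Lasalle's principle with $H$ as Lyapunov function (with the key observation that on an invariant subset of $\{\dot{H}=0\}$ the damping term vanishes, so the constrained system is exactly~\eqref{cond:stability} and the stability hypothesis gives convergence of every noise-free trajectory to $c_0$), followed by the Stroock--Varadhan support theorem with the trivial control $\psi\equiv 0$. Where you genuinely differ is the final transfer from deterministic convergence to $\mu(\BB(c_0,\eg))>0$. The paper fixes a ball $\BB_R$ with $\mu(\BB_R)>0$, exploits the invariance of the sublevel set $K_\eta=\{H<\eta\}\subset\BB(c_0,\eg)$ together with upper semicontinuity of hitting times of open sets to get a single uniform time $T=\sup_{z\in\BB_R}T_z<\infty$, so that $P_T(z,K_\eta)>0$ for \emph{all} $z\in\BB_R$, and then bounds $\mu(\BB(c_0,\eg))\geq\int_{\BB_R}P_T(z,K_\eta)\,\mu(dz)>0$ directly (ergodicity of $\mu$ is in fact never used). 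You instead argue by contradiction and replace the uniform time by a countable union over rational times of the null sets $\{z:\ P_T(z,\BB(c_0,\eg))>0\}$, needing only the pointwise convergence $\Phi_t(z)\to c_0$ and continuity of $t\mapsto\Phi_t(z)$. Your route avoids the semicontinuity-and-compactness step justifying $T<\infty$ and the invariance of $K_\eta$ (and your remark that constant $\sg$ makes It\^o and Stratonovich coincide is a detail the paper leaves implicit), while the paper's uniform $T$ yields the slightly stronger, locally uniform statement of Proposition~\ref{prop:prop}; for the support conclusion itself both arguments are equally valid.
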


\medskip

Since $H$ is continuous at $c_0$, we first notice that for all $\eg>0$ there exists $\eta>0$ such that
$$K_\eta:=\left\{z;\ H(z)< \eta\right\}\subset \BB(c_0,\eg).$$
Theorem~\ref{thm:equilibre} is proved using the following way:~from Lasalle's principle, every solution of the deterministic system associated to~\eqref{cond:stability} goes to $c_0$. Hence, it hits $K_\eta$. As soon as the dynamical system enters $K_\eta$, it stays there, since $K_\eta$ is invariant. If one considers all the solutions starting from the ball of radius $R$, we show that after a finite time $T$ all these solutions are in $K_\eta$. Finally, using Stroock-Varadhan's Support Theorem~\ref{thm:SV} we prove that for all $z$ in the ball $\BB(0,R)$, $c_0$ is in the support of $P_T(z,\cdot)$. Now, we give a formal proof of these facts.

\medskip

\begin{figure}[ht]
\begin{center}
\input{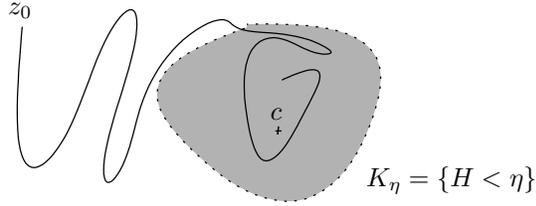}
\end{center}
\caption{Dynamic of the noise-free Hamiltonian system}
\label{fig:contraction}
\end{figure}

\medskip

For any $z\in\R^n$, let $T_z$ be the hitting time of $K_\eta$ starting from $z$, i.e.
$$T_z=\inf\left\{t\geq 0;\ z_t^z\in K_\eta\right\}.$$

We denote
$$T=\sup_{z\in \BB_R}T_z.$$
Since the hitting times of an open set are upper semicontinuous, we get
$$T<+\infty.$$

\medskip

\begin{proposition}\label{prop:prop}
With $T$ and $K_\eta$ defined as before, for all $z\in\BB(0,R)$,
$$P_T(z,K_\eta)>0.$$
\end{proposition}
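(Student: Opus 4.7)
My plan is to combine Lasalle's principle applied to the noise-free trajectory with Stroock--Varadhan's support theorem (Theorem~\ref{thm:SV}). The key observation that makes this work is that the diffusion matrix $\sg$ in~\eqref{eq:hcn} is \emph{constant} (depending only on the temperatures and on the indicators $\indicatrice{i\in\partial\VV}$), so the It\^o and Stratonovich formulations coincide, and the choice $\psi \equiv 0$ in the parametrization of $\mathcal{S}_{T,z}$ reproduces exactly the deterministic trajectory $(z_t^z)_{t\ge 0}$ issued from $z$.

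The main step is to check that $z_T^z \in K_\eta$ for every $z \in \BB(0,R)$. Along the noise-free trajectory one computes $\dot H(z_t^z) = -\sum_{i\in\DD}p_i^2 \le 0$, so $H$ is non-increasing and $K_\eta = \{H < \eta\}$ is forward invariant. The stability condition~\eqref{cond:stability} states that the largest invariant subset of $S=\{\dot H = 0\}$ is $\{c_0\}$, so Lasalle's principle gives $z_t^z \to c_0$; combined with $H(c_0) = 0 < \eta$ this shows that $T_z$ is finite and that $z_t^z \in K_\eta$ for every $t > T_z$. Since $T \ge T_z$ by definition, $z_T^z \in K_\eta$, hence $z_T^z \in \mathcal{S}_{T,z}$. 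Theorem~\ref{thm:SV} then gives $z_T^z \in \Supp\,P_T(z,\cdot)$, and since $K_\eta$ is open this yields $P_T(z, K_\eta) > 0$.

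The only delicate point is the boundary case when the supremum $T = T_{z^\star}$ is attained at some $z^\star \in \BB(0,R)$: by definition of an infimum of an open set of times, $z_T^{z^\star}$ lies a priori only on $\partial K_\eta = \{H = \eta\}$. I would resolve this by using the stability condition a second time: if $H(z_t^{z^\star}) = \eta$ on any interval of positive length starting at $T$, the trajectory would stay in $S$ on that interval, which forces $z_t^{z^\star} \equiv c_0$ and contradicts $H = \eta \ne 0$. Hence $H$ drops strictly below $\eta$ at arbitrarily small times after $T_{z^\star}$, so one concludes either by enlarging $\eta$ infinitesimally (which does not change the conclusion of the proposition) or by superposing a small control $\psi$ supported near time $T$ and acting on $\partial\VV$ to push the endpoint strictly into the open set $K_\eta$.
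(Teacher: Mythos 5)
Your argument is correct and follows essentially the same route as the paper, whose proof consists precisely of applying Stroock--Varadhan's support theorem with the trivial control $\psi\equiv 0$ along the noise-free trajectory, which Lasalle's principle (via the stability condition and the monotonicity of $H$) drives into the invariant set $K_\eta$ by time $T$. The boundary subtlety you flag (if $T=T_{z^\star}$ is attained, the endpoint may lie only on $\{H=\eta\}$) is a real point the paper glosses over, and your first repair is fine in spirit, though the cleanest fix is simply to replace $T$ by any strictly larger time, or to define $T$ through the hitting times of $K_{\eta/2}$, which costs nothing in the application; the ``small control'' alternative you mention would need a genuine argument and is best dropped.
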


\begin{proof}
Straightforward using the support Theorem~\ref{thm:SV} and the trivial control $\psi\equiv0$.
\end{proof}

\medskip

\begin{proof}[Proof of Theorem~\ref{thm:equilibre}]
First notice that we can use Lasalle's principle since
\begin{eqnarray*}
\partial_t H(z_t)&=&\sum_{i\in\VV}\partial_{q_i}H\dot{q}_i+\sum_{i\in\VV}\partial_{p_i}H\dot{p}_i\\
  &=&\sum_{i\in\VV}\partial_{q_i}Hp_i+\sum_{i\in\VV}p_i\left(-\partial_{q_i}H-\indicatrice{i\in\DD}p_i\right)\\
	&=&-\sum_{i\in\DD}p_i^2.
\end{eqnarray*}
Thus, the stability condition~\eqref{cond:stability} involves the convergence of the noise-free solutions to the equilibrium point. Let $\mu$ be an ergodic invariant measure, $\eg>0$. Since $\mu$ is non null, there exists a ball $\BB_R$ such that $\mu(\BB_R)>0$.
Since $H$ is continuous, there exists $\eta$ such that $K_\eta\subset \BB(c_0,\eg)$. Using the previous Proposition~\ref{prop:prop}, there exists $T$ such that for all $z\in \BB_R$,
$$P_T(z,K_\eta)>0.$$
Then, for any invariant measure $\mu$,
\begin{eqnarray*}
\mu\left(\BB(c_0,\eg)\right)&\geq&\mu\left(K_\eta\right)\\
	&=&P_T^\ast\mu\left(K_\eta\right)\\
	&=& \int_{\R^n}P_T\left(z,K_\eta\right)\ \mu(dz)\\
	&\geq&\int_{\BB_R}P_T\left(z,K_\eta\right)\ \mu(dz)\\
	&>&0.
\end{eqnarray*}
Finally, we have shown that $c_0\in \Supp\,\mu.$
\end{proof}

We can now prove uniqueness Theorem~\ref{thm:unique_lasalle}.
\begin{proof}[Proof of Theorem~\ref{thm:unique_lasalle}]
Let us recall (see e.g.~\cite{DP06}) that the set of invariant measures is the convex hull of the set of invariant ergodic measures. Moreover, if $(P_t)$ is asymptotically strong Feller at $c_0$, then $c_0$ belongs to at most one invariant ergodic measure~\cite[Theorem~3.16]{HM06}). But, if there are two invariant ergodic measures, the previous Theorem~\ref{thm:equilibre} ensures that $c_0$ is in both supports. That's impossible!\\
Finally, there is a unique ergodic invariant measure and, since the convex hull of a point is reduced to that point, diffusion~\eqref{eq:hcn} has at most one invariant measure.
\end{proof}

\subsubsection{The harmonic case}\label{section:harmonic_lasalle}
To improve our understanding of the stability condition defined below, we go back to the harmonic case where the pinning and the interaction potentials are quadratic.

\begin{theorem}
When the potentials are quadratic, the stability condition is equivalent to the asymmetry condition.
\end{theorem}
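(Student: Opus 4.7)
The plan is to reduce the stability condition to a purely linear-algebraic statement about iterates of $\Gg$ hitting the $\DD$-coordinates, at which point the projection Lemma~\ref{lem:projection} matches it directly to the asymmetry condition. In the harmonic case $H$ is a non-negative quadratic form vanishing only at $0$, so $c_0 = 0$, and $\partial_{q_i} H = (\Gg q)_i$. The stability condition~\eqref{cond:stability} thus becomes: every smooth $(q(t),p(t))$ with $\dot q = p$, $\dot p = -\Gg q$ and $I_\DD p \equiv 0$ is identically zero.

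For the direction \emph{asymmetry $\Rightarrow$ stability}, I would differentiate the constraint $I_\DD p \equiv 0$ along the flow. A first differentiation yields $I_\DD \Gg q \equiv 0$; alternating differentiations together with $\dot q = p$ and $\dot p = -\Gg q$ give, by an easy induction,
$$I_\DD \Gg^k q(t) = 0 \quad \text{and} \quad I_\DD \Gg^k p(t) = 0 \qquad (k \geq 0, \; t \in \R).$$
The projection Lemma~\ref{lem:projection} then forces $q(t) = p(t) = 0$ for all $t$. For the converse, I would reuse the construction from Lemma~\ref{lem:k}: since $\EE_{M,\DD}^\bot$ (interpreted on the position side as the orthogonal of $\mathrm{Span}\{\Gg^k e_i : i \in \DD, k \in \N\}$) is $\Gg$-stable and $\Gg$ is symmetric, it contains an eigenvector $z$ with eigenvalue $\alpha$. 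Because $\Gg = I + \Dg$ is positive definite, $\alpha > 0$, so
$$q(t) = \cos(\sqrt{\alpha}\, t)\, z, \qquad p(t) = -\sqrt{\alpha}\sin(\sqrt{\alpha}\, t)\, z$$
solves $\dot q = p$, $\dot p = -\Gg q$. Moreover $\{e_j : j \in \DD\} \subset \EE_{M,\DD}$ gives $z_j = 0$ for $j \in \DD$, hence $I_\DD p \equiv 0$. This is a non-trivial solution of the constrained system, so the stability condition fails.

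The only subtle point is notational: $\EE_{M,\DD}$ lives in $\R^n = \R^{2N}$ while $\Gg$ acts on $\R^N$. The cleanest way to handle this is to verify once and for all (by the same kind of induction used in the proof of Lemma~\ref{lem:projection}) that $\EE_{M,\DD} = V \oplus V$ with $V = \mathrm{Span}\{\Gg^k e_i : i \in \DD, k \in \N\}$, so that $\dim \EE_{M,\DD} = n$ is equivalent to $V = \R^N$. Once this identification is explicit, both implications above are essentially automatic; no analytic input is needed beyond smoothness of the noise-free flow.
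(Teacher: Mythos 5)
Your proposal is correct, and its core coincides with the paper's argument: for the direction asymmetry $\Rightarrow$ stability the paper also differentiates the constraint along the noise-free flow to generate the iterates $\langle z,\cdot\,e_{i+N}\rangle\equiv 0$, $i\in\DD$, and concludes from the spanning property of $\EE_{M,\DD}$ (the paper phrases the iteration with the adjacency matrix $\Lg$ and closes by noting $q$ is constant and $H(z)=0$ forces $z=0$, whereas you invoke Lemma~\ref{lem:projection} directly; this is cosmetic). Two points where you go beyond the written proof: (i) the paper's proof of this theorem only establishes asymmetry $\Rightarrow$ stability, the converse being left implicit in the construction of Lemma~\ref{lem:k}; your explicit periodic solution $q(t)=\cos(\sqrt{\alpha}\,t)z$, $p(t)=-\sqrt{\alpha}\sin(\sqrt{\alpha}\,t)z$ built from a $\Gg$-eigenvector $z\in\EE_{M,\DD}^\bot$ (with $\alpha\geq 1$ since $\Gg=I+\Dg$ is positive definite) makes the equivalence genuinely two-sided, which is a worthwhile addition; (ii) you make explicit the identification $\EE_{M,\DD}=V\oplus V$ with $V=\mathrm{Span}\{\Gg^k e_i,\ i\in\DD\}$, which the paper uses tacitly. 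One small repair: the induction from $I_\DD p\equiv 0$ only yields $I_\DD\Gg^k q\equiv 0$ for $k\geq 1$ (there is no direct reason for $I_\DD q\equiv 0$ at the outset), so you should either apply Lemma~\ref{lem:projection} to $p$, for which all $k\geq 0$ do hold, and then get $q=0$ from $\Gg q=-\dot p\equiv 0$ and the invertibility of $\Gg$, or apply it to $\Gg q$; either way the conclusion $z\equiv 0$ is unaffected.
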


\begin{proof}
In this section, we want to prove that if $\dim\,\EE_{M,\DD}=n$, $z_t\equiv 0$ is the unique solution of the system
$$\left\{\begin{array}{rcl}
\dot{q}_i&=&p_i\\
\dot{p}_i&=&-q_i-\sum_{j\sim i}(q_i-q_j)\\
p_i\indicatrice{i\in\DD}&=&0.
         \end{array}\right.$$
Let us rewrite the equation associated to particle $i\in\DD$, $\langle z,e_{i+N}\rangle\equiv0$.
But, if we derive this equation with respect to the time parameter, as $p_i=0$ on $\DD$,
$$\sum_{j\sim i}q_j=constant.$$
Thus, we can write this equation (recall that $\Lg$ is symmetric) $\langle z,\Lg e_{i+N}\rangle\equiv0$.
By induction, we obtain for any $k\in\N$, $i\in\DD$, $t\in\R_+$,
$$\langle z,\Lg^k e_{i+N}\rangle=0.$$
Since $\dim\,\EE_{M,\DD}=n$, 
$$\mathrm{Span}\left\{\Lg^ke_{i+N},\ i\in\DD,\ k\in\N\right\}=\R^N,$$
thus,
$$\langle z,e_{i+N}\rangle\equiv 0,\ \forall i\in \VV.$$

Finally we obtain $q_i=constant,\ \forall i\in\{1,\ldots,N\}.$ Since $0$ is the unique solution of equation $H(z)=0$, we obtain
$$z\equiv 0.$$
\end{proof}

\subsection{Example}
The example presented in figure~\ref{fig:cex} is a heat conduction network that satisfies the stability condition~\eqref{cond:stability} but not Hörmander's condition. We suppose that the interaction potential $U$ satisfies $U(q)=q^4$ and that the pinning potential is quadratic, $V(q)=q^2$. Let us notice that all the particles are damped. Thus, stability condition is easily satisfied. We will see in Section~\ref{section:existence} that this system has an invariant measure. However, we are going to see below that Hörmander's condition is not satisfied.\\
First of all, $[X_0,\partial_{p_1}]=-\partial_{q_1}+\partial_{p_1}$, thus $\partial_{q_1}\in\Lie$.\\
We compute the Lie bracket between $X_0$ and $\partial_{q_1}$,
\begin{eqnarray*}
 [X_0,\partial_{q_1}]&=&\sum_{i\in\VV}\partial_{q_iq_1}^2H\partial_{p_i}\\
   &=&V''(q_1)\partial_{p_1}+U''(q_1-q_3)\partial_{p_1}+U''(q_1-q_4)\partial_{p_1}-\cdots\\
   &&\cdots-U''(q_3-q_1)\partial_{p_3}-U''(q_4-q_1)\partial_{p_4}.
       \end{eqnarray*}
So, $$U''(q_1-q_3)\partial_{p_3}+U''(q_1-q_4)\partial_{p_4}\in\Lie(z).$$
Computing the Lie bracket between the preceding quantity and $\partial_{q_1}$, we obtain
$$U'''(q_1-q_3)\partial_{p_3}+U'''(q_1-q_4)\partial_{p_4}\in\Lie(z).$$
With a similar computation we get, $U^{(4)}(q_1-q_3)\partial_{p_3}+U^{(4)}(q_1-q_4)\partial_{p_4}\in\Lie(z),$ thus
$$\partial_{p_3}+\partial_{p_4}\in\Lie(z).$$
We finally distinguish the following cases.
\begin{itemize}
\item If $q_1\neq q_3$ and $q_1\neq q_4$, Vandermonde's determinant does not vanish, hence
$$\dim\,\Lie(z)=12.$$
\item If $q_1=q_3$ and $q_1\neq q_4$ then $\partial_{p_4}\in\Lie(z)$, and the last Lie bracket gives $\partial_{p_3}\in\Lie(z)$, so
$$\dim\,\Lie(z)=12.$$
The same computations could be done if we consider the particles $2,\,5$ or $6$.

\item If $q_1=q_3=q_4=q_5=q_6=q$, we only get $\partial_{p_3}+\partial_{p_4}\in\Lie(z)$. Then, the Lie bracket with $X_0$ gives
$$\partial_{q_3}+\partial_{q_4}\in\Lie(z),$$
so
$$V''(q)\partial_{p_3}+V''(q)\partial_{p_4}\in\Lie.$$
Thus,
$$\dim\,\Lie(z)=10.$$
\end{itemize}
Finally, Hörmander's condition is not satisfied on the manifold defined by $\{z\in\R^n;\,q_1=\cdots=q_6\}$.

\section{Existence of the invariant measure}\label{section:existence}
Recall that $u$ (resp. $v$) denotes the degree of the polynomial $U$ (resp. $V$). Henceforth we assume that $u\geq v$, i.e. the interaction is stronger than the pinning. This section is devoted to the proof of the existence statement of Theorem~\ref{thm:existence}. The proof of this theorem is based on the arguments developed in~\cite{RBT02}. We give the outlines of the proof to emphasize the importance of the rigidity condition.

To prove the theorem we use the classical Krylov-Bogoliubov method.
\begin{lemma}\label{lem:kb}
Let us suppose that there exists a Lyapunov function $W$ such that
\begin{enumerate}
\item $\LL W(z)\leq CW(z)$ for a positive constant $C$,
\item there exists a time $t_0>0$ and a sequence $a_n\uparrow+\infty$ such that
$$\lim_{n\to\infty}\sup_{\{z;\,W(z)>a_n\}}\frac{P_{t_0}W(z)}{W(z)}=0.$$
\end{enumerate}
Then the semigroup $(P_t)$ has an invariant measure.
\end{lemma}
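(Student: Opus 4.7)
The plan is to run the classical Krylov–Bogoliubov procedure: fix a point $z_0$, form the time averages
$$\mu_T=\frac{1}{T}\int_0^T P_t^\star \dg_{z_0}\, dt,$$
prove that $(\mu_T)_{T>0}$ is tight, extract a weakly convergent subsequence, and check that the limit is $(P_t)$-invariant. Because $W$ is a Lyapunov function with compact sublevel sets, Markov's inequality $\mu_T(\{W\geq R\})\leq (TR)^{-1}\int_0^T P_tW(z_0)\, dt$ reduces the tightness of $(\mu_T)$ to the uniform moment bound $\sup_{t\geq 0} P_tW(z_0)<+\infty$.

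I would establish this bound in two steps. First, hypothesis (2) provides an integer $n$ and some $\alpha\in(0,1)$ with $P_{t_0}W\leq \alpha W$ on $\{W>a_n\}$; hypothesis (1) combined with Dynkin's formula yields $P_{t_0}W\leq e^{Ct_0}W$ everywhere, so on the complement $\{W\leq a_n\}$ the quantity $P_{t_0}W$ is pointwise bounded by the constant $e^{Ct_0}a_n$. Together these give the discrete-time Foster–Lyapunov inequality
$$P_{t_0}W(z)\leq \alpha W(z)+K,\qquad K:=e^{Ct_0}a_n.$$
Iterating along multiples of $t_0$ gives $P_{kt_0}W(z_0)\leq \alpha^k W(z_0)+K/(1-\alpha)$, uniformly in $k$; interpolating on $[kt_0,(k+1)t_0]$ with $P_sW\leq e^{Ct_0}W$ upgrades this to a bound uniform in $t\geq 0$, which is the tightness input we need.

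Prokhorov's theorem then yields $\mu_{T_k}\rightharpoonup \mu$ along some subsequence. To check invariance, I would invoke the Feller property of $(P_t)$, which holds here because the drift in~\eqref{eq:hcn} is smooth with polynomial growth and the diffusion coefficient is constant, so that $P_sf\in C_b(\R^n)$ whenever $f\in C_b(\R^n)$. For any such $f$ and any $s\geq 0$, a standard change of variables gives
$$\int P_sf\, d\mu_T-\int f\, d\mu_T=\frac{1}{T}\int_T^{T+s}P_uf(z_0)\, du-\frac{1}{T}\int_0^s P_uf(z_0)\, du,$$
a quantity of absolute value at most $2s\|f\|_\infty/T$. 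Sending $T=T_k\to\infty$ produces $\int P_sf\, d\mu=\int f\, d\mu$, i.e.\ $P_s^\star\mu=\mu$.

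The main obstacle is not this abstract scheme but the strength of hypothesis (2): assumption (1) on its own gives only $P_tW\leq e^{Ct}W$, which is useless as $t\to\infty$. Condition (2) demands a genuine one-step contraction of $P_{t_0}$ outside a compact set, and verifying it for heat conduction networks is where the growth regime $u\geq v$ and the rigidity condition~\eqref{cond:rigidite} actually enter; the real work of the remainder of the section will be to build a Lyapunov function $W$, presumably a perturbation of the Hamiltonian adapted to the network's geometry, for which the geometric drift (2) can be checked.
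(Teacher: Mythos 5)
Your proposal is correct and follows essentially the same route as the paper: the same discrete Foster--Lyapunov bound $P_{t_0}W\leq aW+b$ outside/inside the sublevel set, the same iteration along multiples of $t_0$ combined with the interpolation $P_tW\leq e^{Ct}W$ from hypothesis (1) to get $\sup_t P_tW(z_0)<\infty$, and then the Krylov--Bogoliubov time-average and compactness argument for existence. The only difference is that you spell out the Prokhorov extraction and the Feller-property check of invariance, which the paper leaves implicit in the phrase ``Krylov--Bogoliubov argument.''
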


\begin{proof}
We divide the proof in two main steps.

First, let $K$ be the compact set $\{W(z)\leq a_n\}$, with $n$ large enough. Using the hypothesis, there exist $a\in(0,1),\,b$ such that
$$P_{t_0}W(z)\leq aW(z)+b\indicatrice{K}(z).$$
Thus, using the semigroup property, for any $n\in\N$,
\begin{eqnarray*}
P_{nt_0}W(z)&\leq& a^nW(z)+b+ab+\cdots+a^{n-2}b+a^{n-1}b\indicatrice{K}(z)\\
    &\leq&a^n W(z)+\frac{b}{1-a}.
\end{eqnarray*}
Since $\LL W(z)\leq CW(z)$, we get $P_tW(z)\leq e^{Ct}W(z)$ and choosing $n$ with $nt_0\leq t<(n+1)t_0$, we get
\begin{eqnarray*}
P_tW(z)&=&P_{nt_0}P_{t-nt_0}W(z)\\
  &\leq&e^{C(t-nt_0)}P_{nt_0}W(z)\\
  &\leq&e^{C(t-nt_0)}\left(a^nW(z)+\frac{b}{1-a}\right)\\
  &\leq&e^{Ct_0}\left(W(z)+\frac{b}{1-a}\right).
\end{eqnarray*}
Thus, $\sup_tP_tW(z)<+\infty.$

Second, let $z\in\R^n$. We consider the family of measures defined for any bounded continuous function $f$ by
$$\nu_t(f)=\tfrac{1}{t}\int_0^tP_sf(z)\,ds.$$
Using the first step, $\sup_t\nu_t(W)\leq\sup_tP_tW(z)<+\infty$. Thus, $(\nu_t)$ is tight and Krylov-Bogoliubov argument gives the existence of the invariant measure.
\end{proof}

To prove that the conditions of the previous lemma are satisfied in the heat conduction network setting, we use the Lyapunov function $W(z)=e^{\bg H(z)}$.
\begin{lemma}
Let $\bg\in\R$ such that $0<\bg<\max(T_i,\,i\in\partial\VV)^{-1}$. There exists a constant $C>0$ and $b>1$ such that
$$\frac{P_tW(z)}{W(z)}\leq e^{C\bg t\sum_iT_i}\E_z\left[e^{-C\int_0^t\sum_{i\in\DD}p_i^2\,ds}\right]^{1/b}.$$
\end{lemma}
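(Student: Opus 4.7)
The plan is to apply It\^o's formula to $\bg H$, exponentiate to rewrite $W(Z_t)/W(z)$ as a product of a deterministic factor, an integrated negative quadratic in the damped momenta, and a Girsanov-type exponential (super)martingale, and then isolate the martingale factor via H\"older's inequality.

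First I would compute $\LL H$ directly using the formula from the introduction. The Poisson bracket vanishes and, since $\partial_{p_i}H=p_i$ and $\partial_{p_i}^2H=1$, one finds
\begin{equation*}
\LL H=-\sum_{i\in\DD}p_i^2+\sum_{i\in\partial\VV}T_i.
\end{equation*}
It\^o's formula applied to $\bg H$ then yields
\begin{equation*}
\bg H(Z_t)-\bg H(z)=\bg t\sum_{i\in\partial\VV}T_i-\bg\int_0^t\sum_{i\in\DD}(p_i^s)^2\,ds+N_t,
\end{equation*}
where $N_t=\bg\sum_{i\in\partial\VV}\sqrt{2T_i}\int_0^t p_i^s\,dB_i^s$ is a local martingale with quadratic variation $\langle N\rangle_t=2\bg^2\int_0^t\sum_{i\in\partial\VV}T_i(p_i^s)^2\,ds$.

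Next I would exponentiate and write $e^{N_t}=\mathcal{E}_t\,e^{\tfrac12\langle N\rangle_t}$ with $\mathcal{E}_t=\exp(N_t-\tfrac12\langle N\rangle_t)$ a positive local martingale. The resulting deterministic-in-path integrand is $-\bg\sum_{i\in\DD}p_i^2+\bg^2\sum_{i\in\partial\VV}T_ip_i^2$, and using $\partial\VV\subset\DD$ together with $\bg\max_{i\in\partial\VV}T_i<1$ it is bounded from above by $-C_0\sum_{i\in\DD}p_i^2$ for $C_0=\bg(1-\bg\max_{i\in\partial\VV}T_i)>0$ --- this is precisely where the hypothesis on $\bg$ is essential. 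Taking expectations and applying H\"older's inequality with conjugate exponents $b$ and $b^\star=b/(b-1)$ then gives
\begin{equation*}
\frac{P_tW(z)}{W(z)}\leq e^{\bg t\sum_{i\in\partial\VV}T_i}\,\E_z\!\left[e^{-bC_0\int_0^t\sum_{i\in\DD}p_i^2\,ds}\right]^{1/b}\E_z[\mathcal{E}_t^{b^\star}]^{1/b^\star},
\end{equation*}
which matches the stated form with $C=bC_0$ once the third factor is suitably bounded.

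The main obstacle is precisely that last bound. Since $\mathcal{E}_t^{b^\star}$ is \emph{not} a martingale for $b^\star>1$, I would decompose it as
\begin{equation*}
\mathcal{E}_t^{b^\star}=\exp\!\bigl(b^\star N_t-\tfrac{(b^\star)^2}{2}\langle N\rangle_t\bigr)\cdot\exp\!\bigl(\tfrac{b^\star(b^\star-1)}{2}\langle N\rangle_t\bigr).
\end{equation*}
The first factor is a positive local martingale, hence a supermartingale of expectation at most one. The correction factor contains $\int_0^t\sum_{i\in\partial\VV}T_ip_i^2\,ds$, and the crucial flexibility is that $b$ may be chosen arbitrarily large --- equivalently, $b^\star$ arbitrarily close to $1$ --- making the coefficient $b^\star(b^\star-1)\bg^2$ as small as desired. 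A further H\"older or Cauchy--Schwarz split then allows the correction to be absorbed against the negative drift coming from the damping, producing a growth bound on $\E_z[\mathcal{E}_t^{b^\star}]^{1/b^\star}$ of the form $e^{c\,t}$ which is absorbed into the deterministic prefactor $e^{\bg t\sum T_i}$ by enlarging $C$ if necessary.
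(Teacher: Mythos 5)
Your opening steps (It\^o's formula for $\bg H$, the martingale $N_t$ and its bracket, and the observation that $\bg\max_{i\in\partial\VV}T_i<1$ is what makes the damping dominate the bracket) agree with the paper's computation, but the way you organize the H\"older step creates a genuine gap. By splitting $e^{N_t}=\mathcal{E}_t\,e^{\frac12\langle N\rangle_t}$ with the compensator coefficient $\tfrac12$ and grouping $e^{\frac12\langle N\rangle_t}$ with the damping, you allocate the whole negative term $-\bg\int_0^t\sum_{i\in\DD}p_i^2\,ds$ to one H\"older factor; the other factor $\E_z\bigl[\mathcal{E}_t^{b^\star}\bigr]^{1/b^\star}$ then stands alone, and there is no damping left inside that expectation to ``absorb the correction against'', contrary to what you assert. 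Whichever way you iterate H\"older or Cauchy--Schwarz on $\mathcal{E}_t^{b^\star}$, the exponential-supermartingale part has expectation at most one but you are left needing a bound of the form $\E_z\bigl[\exp\bigl(c\int_0^t\sum_{i\in\partial\VV}T_ip_i^2\,ds\bigr)\bigr]\leq e^{c't}$ \emph{uniformly in $z$} for some $c>0$ (uniformity is required, since the prefactor $e^{C\bg t\sum_iT_i}$ in the lemma does not depend on $z$). This is false no matter how small $c$ is: already in the harmonic case Jensen's inequality gives $\E_z\bigl[\exp\bigl(c\int_0^t p_i^2\,ds\bigr)\bigr]\geq e^{c\,const\,\|z\|^2}$ along suitable directions of $z$, and in general a large initial momentum at a boundary particle makes the integral of order $\|z\|^2$ with probability close to one. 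Taking $b^\star$ close to $1$ shrinks the coefficient but does not remove it, so the claimed $e^{ct}$ bound absorbed into the prefactor does not exist; bounding that expectation is essentially the same problem as bounding $P_tW$ itself.

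The paper closes this loop by building the H\"older exponent into the compensator \emph{before} splitting. With conjugate exponents $a,b$, $a>1$ chosen close enough to $1$ that $a\bg\max_{i\in\partial\VV}T_i<1$ (possible by the hypothesis on $\bg$), one writes
$$e^{\bg(H(Z_t)-H(z))}=e^{\bg\MM_t-a\frac{\bg^2}{2}[\MM,\MM]_t}\;e^{a\frac{\bg^2}{2}[\MM,\MM]_t+\bg t\sum_{i\in\partial\VV}T_i-\bg\int_0^t\sum_{i\in\DD}p_i^2\,ds},$$
and applies H\"older with exponents $a$ and $b$. The first factor raised to the power $a$ is exactly the exponential supermartingale of $a\bg\MM$, hence has expectation at most $1$; in the second factor the correction $a\bg^2\int_0^t\sum_{i\in\partial\VV}T_ip_i^2\,ds$ sits in the \emph{same} expectation as the damping and is dominated by it, since $\partial\VV\subset\DD$ and $a\bg T_i<1$, leaving $-C_1\int_0^t\sum_{i\in\DD}p_i^2\,ds$ with $C_1=\bg(1-a\bg\max_{i\in\partial\VV}T_i)>0$ and yielding the stated bound. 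In short, the quadratic-variation correction must be paid inside the factor that still contains the damping, which forces the $a$-dependent compensator; your version, with the correction isolated on the martingale side, cannot be closed and needs to be reorganized along these lines.
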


We recall briefly the proof of this lemma.
\begin{proof}
Let $\bg\in\left(0,\max\left(T_i,\,i\in\partial\VV\right)^{-1}\right)$. Using Itô's formula,
\begin{eqnarray*}
H(Z_t^z)&=&H(z)+\sum_{i\in\partial\VV}T_it-\int_0^t\sum_{i\in\DD}p_i^2\,ds+\MM_t.
\end{eqnarray*}
But $(\MM_t)$ is a martingale with quadratic variation $\left[\MM,\MM\right]_t=2\int_0^t\sum_{i\in\partial\VV}T_ip_i^2\,ds.$ Thus,

\begin{eqnarray*}
\frac{P_tW(z)}{e^{\bg H(z)}}&=&\E_z\left[e^{\bg (H(Z_t)-H(z))}\right]\\
  &=&\E_z\left[e^{\bg \MM_t-a\frac{\bg^2}{2}[\MM,\MM]_t}e^{a\frac{\bg^2}{2}[\MM,\MM]_t+\bg\sum_{i\in\partial\VV}T_it-\bg\int_0^t\sum_{i\in\DD}p_i^2\,ds}\right]\\
\end{eqnarray*}
The inequality is obtained using Hölder inequality and exponential martingales.
\end{proof}

To show the semigroup property in Krylov-Bogoliubov Lemma~\ref{lem:kb}, we use a scaling argument to reduce our problem to a noise-free dynamics.

Let $(E_n)$ be an energy sequence growing to $+\infty$. We use the scaled Hamiltonian
$$H_n(q,p)=\sum_{i\in\VV}\frac{p_i^2}{2}+\tfrac{1}{E_n}V\left(E_n^{1/u}q_i\right)+\tfrac{1}{2E_n}\sum_{j\sim i}U\left(E_n^{1/u}(q_j-q_i)\right).$$
Let us consider the following stochastic differential equation
\begin{equation*}
\left\{\begin{array}{rcl}
       dq_i&=&\partial_{p_i}H_n\,dt\\
       dp_i&=&-\partial_{q_i}H_n\,dt-\indicatrice{i\in\DD}E_n^{1/u-1/2}\,p_i\,dt+\indicatrice{i\in\partial\VV}E_n^{1/2u-3/4}dB_i.
       \end{array}\right.
\tag{$\SS_n$}\label{eq:scaling}
\end{equation*}

The solutions of this system converge to the solutions of the Hamiltonian system described by the limit Hamiltonian (see~\cite[Section~5]{C07})
$$H_\infty(q,p)=\sum_{i\in\partial\VV}\frac{p_i^2}{2}+\indicatrice{u=v}|q_i|^v+\tfrac{1}{2}\sum_{j\sim i}(q_j-q_i)^u.$$

Thus, we consider the noise-free system, for any $i\in\VV$,
\begin{equation*}
\left\{\begin{array}{rcl}
       \dot{q}_i&=&\partial_{p_i}H_\infty\\
       \dot{p}_i&=&-\partial_{q_i}H_\infty-\indicatrice{i\in\DD}\,p_i.
       \end{array}\right.
\tag{$\SS_\infty$}\label{eq:sans_bruit}
\end{equation*}

To conclude, we need the following \emph{rigidity condition}. The differential system~\eqref{eq:hcn} is said to be rigid if
\begin{equation}\label{cond:rigidite}
\left\{\begin{array}{rcl}
       \dot{q}_i&=&\partial_{p_i}H_\infty\\
       \dot{p}_i&=&-\partial_{q_i}H_\infty\\
       p_i\indicatrice{i\in\DD}&=&0.
       \end{array}\right.
\end{equation}
has a unique solution given by $z_t\equiv c_0$.

\begin{lemma}
If the system satisfies the rigidity condition, then for any $z_0\in\R^n$ such that $H_\infty(z_0)=1$, for any solution of~\eqref{eq:sans_bruit} starting from $z_0$, for any $t>0$,
$$\int_0^t\sum_{i\in\DD}p_i^2\,ds>0.$$
\end{lemma}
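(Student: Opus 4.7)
The plan is to argue by contradiction: vanishing of the dissipation on a non-degenerate time interval would force the trajectory onto the rigidity system~\eqref{cond:rigidite}, hence onto the equilibrium $c_0$; but this contradicts the energy normalization $H_\infty(z_0)=1$.

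More precisely, I would suppose for contradiction that there exists $t>0$ with $\int_0^t \sum_{i\in\DD} p_i^2(s)\,ds = 0$. Because the vector field driving~\eqref{eq:sans_bruit} is polynomial in $(q,p)$, its solutions are of class $\mathcal{C}^1$ (indeed real-analytic) in time, so each component $p_i$ is continuous. Thus $s\mapsto \sum_{i\in\DD} p_i^2(s)$ is continuous, non-negative, and has zero integral on $[0,t]$, so it vanishes identically on $[0,t]$. This yields $p_i(s)=0$ for every $s\in[0,t]$ and every $i\in\DD$.

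Once the damped momenta vanish on $[0,t]$, the term $\indicatrice{i\in\DD}\,p_i$ drops out of the equation for $\dot p_i$, and the restriction $z|_{[0,t]}$ satisfies the purely Hamiltonian dynamics $\dot q_i = \partial_{p_i} H_\infty$, $\dot p_i = -\partial_{q_i} H_\infty$ together with the constraint $p_i\indicatrice{i\in\DD}\equiv 0$. This is exactly system~\eqref{cond:rigidite}, so the rigidity hypothesis forces $z_s\equiv c_0$ on $[0,t]$. In particular $H_\infty(z_0) = H_\infty(c_0) = 0$, contradicting $H_\infty(z_0) = 1$.

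The only subtlety I foresee is that the rigidity condition is phrased for globally-defined trajectories, whereas the argument above exhibits a trajectory satisfying~\eqref{cond:rigidite} a priori only on $[0,t]$. This is not a real obstacle: the Hamiltonian vector field being polynomial, Cauchy--Lipschitz extends $z|_{[0,t]}$ uniquely and analytically to all of $\R$, and the identity $p_i\equiv 0$ on $\DD$ persists on the extension by analytic continuation, so the rigidity hypothesis applies and closes the contradiction.
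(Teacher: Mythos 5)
Your proposal is correct and follows essentially the same route as the paper: assume the dissipation integral vanishes, conclude that the damped momenta vanish so the trajectory solves the rigidity system~\eqref{cond:rigidite}, invoke rigidity to get $z_s\equiv c_0$, and contradict $H_\infty(z_0)=1$ since $H_\infty(c_0)=0$. The paper's proof is just a terser version of this argument; your added details (continuity of $\sum_{i\in\DD}p_i^2$ and the extension of the trajectory beyond $[0,t]$) simply make explicit steps the paper leaves implicit.
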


\begin{proof}
Let us suppose that there exists $t>0$ such that
$$\int_0^t\sum_{i\in\DD}p_i^2\,ds=0.$$
Let us recall that we supposed that $H$ is strictly convex and reaches its minimum at $c_0\in\R^n$. Since the rigidity condition~\eqref{cond:rigidite} is satisfied, $z_t=c_0$ and we get $H_\infty(z_0)=H_\infty(c_0)=0$.
We have thus obtained a contradiction.
\end{proof}

The existence Theorem~\ref{thm:existence} is a consequence of the following lemma. The proof can be found in~\cite[Lemma~5.2]{C07}.
\begin{lemma}
Let $Z_n$ be a solution of the scaled system~\eqref{eq:scaling} starting from $z_n$ with $H(z_n)\to\infty$. Then, there exists a subsequence $(z_{n_k})_k$ such that for any $C>0,\,t>0$,
$$\lim_{k\to\infty}\E_{z_{n_k}}\left[e^{-C\int_0^t\sum_{i\in\DD}p_i^2\,ds}\right]=0.$$
\end{lemma}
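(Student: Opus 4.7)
The plan is to use the energy-normalising scaling already developed in this section: rescale so that the process solves $\eqref{eq:scaling}$ with initial energy $1$, pass to the noise-free limit system $\eqref{eq:sans_bruit}$, and then invoke the preceding lemma to conclude that the dissipation $\int\sum p_i^2$ stays bounded below.

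I begin by normalising. Set $E_n=H(z_n)$, define
$$\tilde z_n=\bigl(E_n^{-1/u}(z_n)_q,\,E_n^{-1/2}(z_n)_p\bigr),$$
and introduce the time change $\tau=E_n^{1/2-1/u}\,t$. Using the identity $H_n(q,p)=H(E_n^{1/u}q,\,E_n^{1/2}p)/E_n$, one checks that the rescaled trajectory $\tilde Z_n$ solves $\eqref{eq:scaling}$ with $H_n(\tilde z_n)=1$, and a direct change of variables yields
$$\int_0^t\sum_{i\in\DD}P_i^2\,ds=E_n^{1/2+1/u}\int_0^{\tau_n(t)}\sum_{i\in\DD}\tilde p_i^2\,d\sigma,\qquad \tau_n(t)=E_n^{1/2-1/u}\,t.$$
Since $u\geq v\geq 2$, we have $\tau_n(t)\geq t$, and positivity of the integrand gives the lower bound
$$\int_0^t\sum_{i\in\DD}P_i^2\,ds\ \geq\ E_n^{1/2+1/u}\int_0^{t}\sum_{i\in\DD}\tilde p_i^2\,d\sigma.$$

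Next I extract a convergent subsequence of initial data. The set $\{H_n=1\}$ is compact modulo the diagonal direction $q_i\equiv c$ along which $H_\infty$ is degenerate when $u>v$; using translation-invariance of the SDE in that direction I can shift and extract $\tilde z_{n_k}\to z_\infty$ with $H_\infty(z_\infty)=1$. Because both the damping coefficient $E_n^{1/u-1/2}$ and the noise coefficient $E_n^{1/(2u)-3/4}$ vanish as $n\to\infty$ (as $u\geq 2$), the scaling analysis of~\cite[Section~5]{C07} gives that $\tilde Z_{n_k}$ converges in law on $C([0,t];\R^n)$ to the deterministic trajectory $z^\infty$ of $\eqref{eq:sans_bruit}$ starting from $z_\infty$. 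By continuous mapping applied to the nonnegative functional $\omega\mapsto\int_0^t\sum_{i\in\DD}\omega_i^2\,d\sigma$,
$$\int_0^t\sum_{i\in\DD}\tilde p_{n_k,i}^2\,d\sigma\ \xrightarrow[k\to\infty]{\mathrm{law}}\ L:=\int_0^t\sum_{i\in\DD}(p_i^\infty)^2\,d\sigma,$$
and the preceding lemma, applied to $z^\infty$ with $H_\infty(z_\infty)=1>0$, gives $L>0$.

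To conclude, pass to a Skorohod realisation so that the above convergence becomes almost sure; since $E_{n_k}^{1/2+1/u}\to\infty$ and the integral tends to $L>0$, the exponent $-CE_{n_k}^{1/2+1/u}\int_0^t\sum\tilde p_{n_k,i}^2\,d\sigma$ tends to $-\infty$ almost surely, and bounded convergence (the integrand lies in $[0,1]$) gives
$$\E_{z_{n_k}}\!\left[e^{-C\int_0^t\sum_{i\in\DD}p_i^2\,ds}\right]\ \leq\ \E\!\left[e^{-CE_{n_k}^{1/2+1/u}\int_0^t\sum_{i\in\DD}\tilde p_{n_k,i}^2\,d\sigma}\right]\ \xrightarrow[k\to\infty]{}\ 0.$$
The main obstacle is the passage to the limit of the dynamics in step two: the coefficients of $\eqref{eq:scaling}$ are $n$-dependent and the level set $\{H_\infty\leq 1\}$ is non-compact when $u>v$, so translation-invariance along the diagonal direction must be exploited carefully to obtain a genuine limiting initial condition. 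This is handled by~\cite[Section~5]{C07}; the remaining steps (the scaling identity, the lower bound from $\tau_n(t)\geq t$, continuous mapping, and bounded convergence) are routine.
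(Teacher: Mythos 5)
Your overall route -- rescale by $E_n=H(z_n)$ so that $H_n(\tilde z_n)=1$, pass to the noise-free limit dynamics, invoke the preceding positivity lemma, and conclude by Skorohod representation and bounded convergence against the diverging factor $E_n^{1/2+1/u}$ -- is exactly the strategy the paper has in mind; note, though, that the paper gives no proof of this lemma at all and simply refers to \cite[Lemma~5.2]{C07}, so the comparison is with that intended argument. Your scaling identity, the time change $\tau_n(t)=E_n^{1/2-1/u}t\geq t$ (valid for $n$ large since $u\geq 2$), and the final bounded-convergence step are correct.

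The genuine gap is the extraction step, and it occurs precisely in the regime $u>v$ that the theorem is designed to cover. From $H_n(\tilde z_{n_k})=1$ you conclude, after recentring along the diagonal, that $\tilde z_{n_k}\to z_\infty$ with $H_\infty(z_\infty)=1$. This is unjustified: when $u>v$ the pinning contribution $\sum_i E_n^{-1}V(E_n^{1/u}\tilde q_i)$ has no counterpart in $H_\infty$, yet it can carry any fraction $\delta\in[0,1]$ of the unit energy, so the recentred limit only satisfies $H_\infty(z_\infty)=1-\delta$. Take for instance $z_n$ with all particles at a common position $c_n$, zero momenta and $NV(c_n)=E_n$: every recentred subsequential limit is then a minimizer of $H_\infty$, the limiting trajectory is stationary, the preceding lemma does not apply, and the scaled dissipation $\int_0^t\sum_{i\in\DD}\tilde p_i^2\,d\sigma$ tends to $0$, so its product with $E_n^{1/2+1/u}$ is indeterminate and your final estimate yields nothing -- even though the conclusion is true there (the unscaled dissipation is of order $E_n$, but seeing this requires the pinning scaling $E_n^{1/v}$, or an analysis on the whole window $[0,\tau_n(t)]$ rather than the fixed window $[0,t]$). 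Since the sequence $(z_n)$ is arbitrary, you cannot discard this regime by choosing a subsequence; the case where the energy concentrates in the translation/pinning mode must be handled separately. Two smaller points: translation invariance of \eqref{eq:scaling} along the diagonal is only asymptotic (the pinning force $E_n^{1/u-1}V'(E_n^{1/u}q_i)$ is nonzero for finite $n$), so the shift-and-extract step needs a stability-of-the-dynamics argument rather than exact invariance; and the damping coefficient $E_n^{1/u-1/2}$ does not vanish when $u=2$ -- harmless, since \eqref{eq:sans_bruit} retains the damping term, but your stated reason for identifying the limit dynamics is off. (The delicacy of the rigidity condition \eqref{cond:rigidite} for $u>v$, where $H_\infty$ is translation invariant, is an issue you inherit from the paper rather than create.)
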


\begin{remark} 
 Contrary to the results obtained for a chain, we cannot prove using this method that the convergence speed to the invariant measure is exponential. Indeed, this rate was obtained via compactness properties that we could not generalize. However, the results of~\cite{HM07} are still valid~:~when the interaction potential is quadratic and the pinning potential is at least of degree $4$, $0$ is in the essential spectrum of the extension of the generator $\LL$ to the space $L^2(e^{-\bg H}\,dp\,dq)$, with $\bg<\min(1/T_i,\, i\in\partial\VV)$.
\end{remark}

\appendix
\section{The Asymptotic Strong Feller property}\label{section:asf}
In this section we prove that the Asymptotic Strong Feller property is satisfied, if the potentials are harmonic, when the position of the damped particles is asymmetric, i.e. $\dim\,\EE_{M,\DD}=n$. When the potentials are convex polynomials, we expect that the diffusion~\eqref{eq:hcn} is Asymptotically Strong Feller at the equilibrium point $c_0$.

The Asymptotic Strong Feller property was introduced by M.~Hairer and J.~Mattingly in~\cite{HM06} to study the stochastic Navier-Stokes equation. We recall briefly the main definitions.
\begin{definition}[Totally separating system] An increasing sequence $(d_n)$ of pseudo-metrics is a \emph{totally separating system} if for all $x\neq y\in\R^n$,
$$d_n(x,y)\to 1.$$
\end{definition}

\begin{definition}[Asymptotic Strong Feller]
A semigroup $(P_t)$ is called \emph{asymptotically strong Feller} at $z\in\R^n$ if there exists a totally separating system of pseudo-metrics $(d_n)$ and a sequence $t_n>0$ such that
$$\lim_{\gg\to 0}\limsup_{n\to\infty}\sup_{y\in \BB(z,\gg)}\|P_{t_n}(z,\cdot)-P_{t_n}(y,\cdot)\|_{d_n}=0.$$
\end{definition}

\begin{proposition}[Gradient property, see~\cite{HM06}, Proposition~3.12] \label{prop:gradient}
Let $z\in\R^n$. Assume there exists a non-decreasing function $C:\R_+\to\R$ and two positive sequences $t_n\uparrow\infty,\ \dg_n\downarrow 0$ such that for all differentiable function $\phi$ with $\|\phi\|_\infty=\sup_z|\phi(z)|$ and $\|\nabla\phi(z)\|_\infty$ finite, for any $y$ in a neighbourhood of $z$,
$$|\nabla P_{t_n}\phi(y)|\leq C(\|y\|)\left(\|\phi\|_\infty+\dg_n\|\nabla\phi\|_\infty\right).$$
Then the semigroup is asymptotically strong Feller at $z$.

A corresponding totally separating system of pseudo-metrics is then
$$d_n(x,y)=1\wedge\frac{|x-y|}{\dg_n}.$$
\end{proposition}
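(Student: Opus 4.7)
The plan is to reduce the assertion to an integration along straight lines, after unfolding the duality
$$\|\mu-\nu\|_{d_n}=\sup\{|\mu(\phi)-\nu(\phi)|:\mathrm{Lip}_{d_n}(\phi)\leq 1\}.$$
The key observation is that the particular shape of $d_n(x,y)=1\wedge|x-y|/\dg_n$ forces a test function $\phi$ in the unit $\mathrm{Lip}_{d_n}$-ball to satisfy simultaneously two independent estimates: an oscillation bound $\mathrm{osc}(\phi)\leq 1$, coming from the ``$1\wedge$'' part when $x,y$ are far apart, and a Lipschitz bound $\|\nabla\phi\|_\infty\leq 1/\dg_n$, coming from the $|x-y|/\dg_n$ part when $x,y$ are close. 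Adding a constant to $\phi$ (which leaves the integral against $\mu-\nu$ invariant) converts the oscillation bound into $\|\phi\|_\infty\leq 1$.

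With that in hand, I would plug these two bounds into the hypothesis: for every $y$ in the prescribed neighbourhood of $z$,
$$|\nabla P_{t_n}\phi(y)|\leq C(\|y\|)\left(\|\phi\|_\infty+\dg_n\|\nabla\phi\|_\infty\right)\leq 2\,C(\|y\|).$$
The factors $\dg_n$ and $1/\dg_n$ cancel, so the right-hand side is uniform in $n$. Integrating along the segment $[z,y]$ and using the monotonicity of $C$ yields
$$|P_{t_n}\phi(z)-P_{t_n}\phi(y)|\leq 2\,C(\|z\|+\gg)\,|z-y|\leq 2\,C(\|z\|+\gg)\,\gg$$
for every $y\in\BB(z,\gg)$ and every admissible $\phi$. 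Taking the supremum over $\phi$ and $y$, then $\limsup_n$, then letting $\gg\downarrow 0$, reproduces exactly the defining inequality for asymptotic strong Feller at $z$.

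Finally, I would check that $(d_n)$ is a totally separating system, which is immediate: for fixed $x\neq y$, since $\dg_n\downarrow 0$, one has $|x-y|/\dg_n\geq 1$ for $n$ large enough, so $d_n(x,y)=1$ eventually, a fortiori $d_n(x,y)\to 1$. The main delicate step I anticipate is the correct identification of the unit $\mathrm{Lip}_{d_n}$-ball, so that both $\|\phi\|_\infty$ and $\dg_n\|\nabla\phi\|_\infty$ are of order one; once the admissible class is extracted in that form, the remainder is a one-line application of the fundamental theorem of calculus combined with the miraculous cancellation of $\dg_n$ inside the gradient hypothesis.
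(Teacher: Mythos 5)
The paper itself offers no proof of this proposition---it is imported verbatim from \cite{HM06}, Proposition~3.12---so your attempt can only be compared with the original argument, and it essentially reproduces it: the dual description of $\|\mu-\nu\|_{d_n}$, the observation that a function in the unit $\mathrm{Lip}_{d_n}$-ball has oscillation at most $1$ (hence, after subtracting a constant, $\|\phi\|_\infty\leq 1$, which is harmless because $\mu-\nu$ annihilates constants) and Lipschitz constant at most $1/\dg_n$, the resulting bound $|\nabla P_{t_n}\phi(y)|\leq 2C(\|y\|)$ in which the factors $\dg_n$ and $1/\dg_n$ cancel, and the integration along the segment $[z,y]$ giving $\sup_{y\in\BB(z,\gg)}\|P_{t_n}(z,\cdot)-P_{t_n}(y,\cdot)\|_{d_n}\leq 2C(\|z\|+\gg)\,\gg$, a bound uniform in $n$ that vanishes as $\gg\to 0$; the check that $(d_n)$ is an increasing totally separating system is also correct. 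The one step you leave implicit is not the identification of the unit ball but the regularity of its elements: a $d_n$-Lipschitz test function need not be differentiable, whereas the hypothesis only applies to differentiable $\phi$ with finite $\|\nabla\phi\|_\infty$. In the finite-dimensional setting this is closed by a routine mollification: convolution increases neither the oscillation nor the Lipschitz constant, and $\mu(\phi_\eg)-\nu(\phi_\eg)\to\mu(\phi)-\nu(\phi)$, so the supremum defining $\|\mu-\nu\|_{d_n}$ may be computed over smooth test functions; this approximation is precisely how the original proof arranges to apply the gradient bound, and with that sentence added your proof is complete.
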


\begin{proposition}
 When the potentials are harmonic and the graph is asymmetric, i.e. $\dim\,\EE_{M,\DD}=n$, the diffusion semigroup is asymptotically strong Feller at any point in $\R^n$.
\end{proposition}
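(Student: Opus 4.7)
The plan is to verify the hypothesis of the Gradient Property (Proposition~\ref{prop:gradient}) directly, exploiting the linearity of the harmonic diffusion and the stability of the drift matrix $M$ established in Lemma~\ref{lem:stability}.

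First, I would use the explicit representation $Z_t^z = e^{Mt}z + \int_0^t e^{M(t-s)}\sigma\,dB_s$ to note that the Jacobian of the map $z\mapsto Z_t^z$ is the \emph{deterministic} matrix $e^{Mt}$, independent of the Brownian path. Differentiating under the expectation in $P_t\phi(z) = \E[\phi(Z_t^z)]$ and applying the chain rule then yields
\begin{equation*}
\nabla P_t\phi(z) = (e^{Mt})^\star\,\E\bigl[\nabla\phi(Z_t^z)\bigr],
\end{equation*}
and hence the pointwise bound $|\nabla P_t\phi(z)| \leq \|e^{Mt}\|_{\mathrm{op}}\,\|\nabla\phi\|_\infty$ for any bounded differentiable $\phi$ with bounded gradient.

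Next, the asymmetry hypothesis $\dim\,\EE_{M,\DD}=n$ combined with Lemma~\ref{lem:stability} supplies constants $C,\alpha_0 > 0$ with $\|e^{Mt}\|_{\mathrm{op}} \leq Ce^{-\alpha_0 t}$ for every $t\geq 0$. I would then pick any sequence $t_n\uparrow \infty$ and set $\dg_n := Ce^{-\alpha_0 t_n}\downarrow 0$, so that
\begin{equation*}
|\nabla P_{t_n}\phi(z)| \;\leq\; \dg_n\|\nabla\phi\|_\infty \;\leq\; \|\phi\|_\infty + \dg_n\|\nabla\phi\|_\infty.
\end{equation*}
This is exactly the hypothesis of Proposition~\ref{prop:gradient} with constant function $C(\|y\|)\equiv 1$, and the proposition delivers the Asymptotic Strong Feller property at every $z\in\R^n$.

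No step in this argument is delicate: in the linear case the contraction of the deterministic flow forces the gradient of the semigroup itself to decay uniformly, so the $\|\phi\|_\infty$ term on the right-hand side is not even used. The only appeal to the asymmetry hypothesis is through the stability Lemma, and this is exactly the place where the proof would become hard to transport to the anharmonic setting: there the Jacobian of the stochastic flow is random and depends on the full trajectory, so the clean deterministic estimate $\|e^{Mt}\|_{\mathrm{op}}\leq Ce^{-\alpha_0 t}$ is no longer available and one has to establish a pathwise or averaged contraction instead.
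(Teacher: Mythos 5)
Your proof is correct and follows essentially the same route as the paper: both reduce the asymptotic strong Feller property to the gradient criterion of Proposition~\ref{prop:gradient}, bound $\|\nabla P_t\phi\|$ by $\|e^{Mt}\|\,\|\nabla\phi\|_\infty$ using the fact that the Jacobian of the flow is the deterministic matrix $e^{Mt}$, and invoke the stability Lemma~\ref{lem:stability} (via the asymmetry hypothesis) to get exponential decay. The only cosmetic difference is that the paper obtains the Jacobian as the solution of the first-variation equation (so as to comment on the anharmonic case), whereas you read it off the explicit Gaussian solution formula.
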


\begin{proof}
We are going to use the stability of the matrix $M$ to prove that this property is true. First notice that for any unitary vector $\xi\in\R^n$,
\begin{eqnarray*}
D\left(P_t\phi\right)(z)\xi&=&\E\left[D\phi(Z_t^z)\xi\right]\\
	&=&\E\left[(D\phi)(Z_t^z)\circ (DZ_t^x)\xi\right]\\
	&=:&\E\left[(D\phi)(Z_t^z)\rg_t\right].\\
\end{eqnarray*}
If diffusion $(Z_t)$ satisfies the stochastic differential equation $dZ_t=f(Z_t)\, dt + \sg\, dB_t$, $\rg_t=D\phi(Z_t^z)\xi$ satisfies the differential equation (see~\cite[p.30]{B98}),
$$\left\{\begin{array}{ccl}
d\rg_t&=&\left(Df\right)(Z^x_t)\rg_t\ dt,\\
\rg_0&=&\xi.
         \end{array}\right.$$
Thus, for any function $\phi$ with bounded derivatives, we have the upper bound
$$\|\nabla P_t\phi(z)\|\leq \|\nabla\phi\|_\infty \E\left[\|\rg_t\|\right],$$
and it remains to show that $\E\left[\|\rg_t\|\right]\to 0$ when $t\to\infty$.

In the heat conduction network setting, we denote the Hessian of the potential energy
$$Hess(z)=\left(\partial_{q_iq_j}^2H(z)\right)_{1\leq i,j\leq N}.$$
Then, we have
$$\left\{\begin{array}{ccl}
\partial_t\rg_t&=&M(t,z) \rg_t, \\
\rg_0&=&\xi,
         \end{array}\right.$$
with $M(t,z)=\left(\begin{array}{cc}
                         0&I\\
                         -Hess(Z_t^z)&-I_\DD
                        \end{array}
\right)$.

When the potentials are quadratic, $M(t,z)=M$ is constant and since matrix $M$ is stable as soon as the network is asymmetric, we have $\rg_t\to 0$ for any $\xi$ if and only if $\dim\,\EE_{M,\DD}=n$. Thus, if the network is asymmetric, the diffusion is asymptotically strong Feller.
\end{proof}

When the potentials are no more harmonic, we have to study the behaviour of a non-autonomous linear differential equation.

\bibliography{hcn}
\end{document}